\documentclass[11pt,reqno]{amsart}

\usepackage[a4paper,margin=28mm]{geometry}
\usepackage[T1]{fontenc}
\usepackage[utf8]{inputenc}
\usepackage{mathpazo}
\usepackage{microtype}
\usepackage{amsmath,amssymb,amsthm,mathtools,mathrsfs}
\usepackage{enumitem}
\usepackage{xcolor}
\usepackage{booktabs}
\usepackage{hyperref}
\usepackage[nameinlink,noabbrev,capitalise]{cleveref}

\definecolor{linkblue}{RGB}{29,72,118}
\definecolor{citegreen}{RGB}{35,92,72}
\hypersetup{
  colorlinks=true,
  linkcolor=linkblue,
  citecolor=citegreen,
  urlcolor=linkblue,
  pdftitle={Max-Phase Isometries on Schatten p-Classes Below One},
  pdfauthor={Yacan Liu and Qi Liu}
}

\setlist[itemize]{leftmargin=2em,itemsep=2pt,topsep=4pt}
\setlist[enumerate]{leftmargin=2.3em,itemsep=3pt,topsep=4pt}
\allowdisplaybreaks
\numberwithin{equation}{section}

\newtheorem{theorem}{Theorem}[section]
\newtheorem{proposition}[theorem]{Proposition}
\newtheorem{lemma}[theorem]{Lemma}
\newtheorem{corollary}[theorem]{Corollary}

\theoremstyle{definition}
\newtheorem{definition}[theorem]{Definition}
\newtheorem{example}[theorem]{Example}
\theoremstyle{remark}
\newtheorem{remark}[theorem]{Remark}

\DeclareMathOperator{\Tr}{Tr}
\DeclareMathOperator{\rank}{rank}

\DeclareMathOperator{\spanop}{span}
\newcommand{\C}{\mathbb C}
\newcommand{\R}{\mathbb R}
\newcommand{\T}{\mathbb T}
\newcommand{\Sp}{\mathcal S_p}
\newcommand{\Ffin}{\mathcal F}
\newcommand{\Umin}{\mathcal U_{\min}}
\newcommand{\suppL}{\ell}
\newcommand{\suppR}{r}
\newcommand{\orth}{\perp_{\!s}}
\newcommand{\Mp}{\mathfrak M_p}
\newcommand{\eps}{\varepsilon}
\newcommand{\abs}[1]{\lvert #1\rvert}
\newcommand{\norm}[1]{\lVert #1\rVert}
\newcommand{\ip}[2]{\langle #1,#2\rangle}

\title[On Surjective Max-Phase Isometries]
{On Surjective Max-Phase Isometries of Schatten
	$p$-Classes ($0<p<1$)}

\author{Wenwen Zhang}
\address{School of Mathematics and Statistics, Anqing Normal University,
	Anqing 246133, Anhui, China}
\email{Y26220004@stu.aqnu.edu.cn}

\author{Qi Liu}
\address{School of Mathematics and Statistics, Anqing Normal University,
Anqing 246133, Anhui, China}
\email{liuq67@aqnu.edu.cn}

\subjclass[2020]{Primary 46A16, 47B49; Secondary 46B04, 46L52}
\keywords{Schatten quasi-class, max-phase isometry, quasi-Banach space}

\begin{document}

\begin{abstract}
	We investigate max-phase isometries on Schatten
	quasi-classes in the range $0<p<1$. Motivated by
	the rigidity theory for smooth normed spaces
	[\emph{Results Math.} \textbf{80} (2025),
	Article 126], we establish its operator-theoretic
	counterpart in the quasi-Banach setting.
	We prove that every surjective max-phase isometry
	on a finite-dimensional Schatten class is a
	standard real-linear Schatten isometry multiplied
	by an even sign-valued function, without any
	continuity assumption. This extends the scope
	of the classical Schatten isometry classification
	[\emph{Proc. Edinburgh Math. Soc.} \textbf{37}
	(1994), 239--242] to nonlinear mappings
	preserving only unsigned two-point metric data.
	Our approach combines support orthogonality,
	a symmetrized rank-one spectral formula, and
	a max-profile uniqueness principle.
	For arbitrary Hilbert spaces, we obtain the
	corresponding classification on finite-rank
	operators, while continuity yields the full-space
	result. We also establish preservation of rank,
	singular values, and two-sided support orthogonality.
\end{abstract}

\maketitle

\section{Introduction}

A recurring theme in the geometry of normed spaces is that surprisingly
limited information about pairs of vectors can determine the structure of a
mapping.  Wigner's theorem provides a classical illustration: preserving
transition probabilities, rather than phases themselves, still forces a
unitary or antiunitary symmetry.  In the setting of real normed spaces,
related questions have motivated the theory of phase isometries and its
max-phase counterpart.  The latter asks what can be recovered from the
single quantity $\max\{\norm{x+y},\norm{x-y}\}$, which records the larger
of the two diagonal lengths associated with a pair of vectors.  Results of
Tan and Liu for real $\ell_p$ spaces with $p>2$ \cite{TanLiu2025}, and the
phase-linearization theorem of Liu, Liu and Tan for smooth strictly convex
spaces \cite{LiuLiuTan2025}, demonstrate the rigidity of this reduced
metric information.  For the closely related theory of phase isometries,
we refer to \cite{HuangTan2020,IlisevicOmladicTurnsek2021}.

This line of inquiry is also connected with the extension problem initiated
by Tingley \cite{Tingley1987}: to what extent is an isometry between unit
spheres induced by a linear isometry of the ambient spaces?  In the
quasi-Banach setting, notable progress has been made for commutative
$L_p$ spaces \cite{Tan2011,LiRen2010}; an operator-theoretic counterpart
for Schatten classes in the Banach range appears in
\cite{FernandezJordaPeralta2020}.  The max-phase problem is different in
a fundamental respect.  Instead of the full distance on a unit sphere,
one is given only the larger of the norms of a sum and a difference.
Consequently, the available two-point information is weaker, and familiar
metric-extension arguments cannot simply be carried over.

The distinction becomes particularly pronounced for Schatten
quasi-classes $\mathcal S_p(H)$ when $0<p<1$.  In the Banach range,
strict convexity and smoothness provide supporting functionals and
first-order perturbation methods that are useful in rigidity arguments.
Below one, the unit ball is nonconvex and such methods lose their natural
foundation.  Moreover, a perturbation acting on the kernel of an operator
may contribute to the $p$th power of the quasi-norm at order $|t|^p$,
which is incompatible with an ordinary first-order linearization.  These
features make the quasi-Banach case a separate geometric problem, rather
than a routine extension of the known Banach-space theory.

The key observation is that the loss of smoothness is accompanied by a
strong equality principle.  The Schatten $p$-quasi-norm satisfies
$\norm{A+B}_p^p\leq\norm A_p^p+\norm B_p^p$, and equality characterizes
two-sided support orthogonality: $A^*B=AB^*=0$.  This rigidity, also
used in the recent treatment of Tingley's problem for Schatten
quasi-classes \cite{HuangZhu2026}, allows the max-phase data to detect
the mutual position of the left and right supports.  It provides a route
to rank-one geometry that does not rely on local convexity.

We investigate surjective mappings that preserve the max-phase kernel
$\Mp(A,B):=\max\{\norm{A+B}_p,\norm{A-B}_p\}$ on Schatten
quasi-classes.  Our main result gives a complete classification in finite
dimensions, without assuming continuity or linearity.  In the statement,
transpose and entrywise complex conjugation are taken relative to a fixed
orthonormal basis.

\begin{theorem}\label{thm:intro-main}
Let $0<p<1$ and $n\ge2$.  A map
$f:\mathcal S_p^n\to\mathcal S_p^n$ is surjective and satisfies
\begin{equation}\label{eq:maxidentity-intro}
 \Mp(f(A),f(B))=\Mp(A,B)\qquad(A,B\in\mathcal S_p^n)
\end{equation}
if and only if
\begin{equation}\label{eq:main-form-intro}
 f(A)=\eps(A)\Phi(A),
\end{equation}
where $\eps:\mathcal S_p^n\to\{-1,1\}$ is even and $\Phi$ is one of
$A\mapsto UAV$, $A\mapsto UA^{\mathsf t}V$,
$A\mapsto U\overline A V$, or $A\mapsto UA^*V$, with unitary matrices
$U,V$.  The value of $\eps(0)$ is immaterial.
\end{theorem}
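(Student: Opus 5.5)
The "if" direction is immediate. Each $\Phi$ in the list is real-linear and preserves singular values, hence $\norm{\Phi(A)\pm\Phi(B)}_p=\norm{A\pm B}_p$. Multiplying by $\eps(A),\eps(B)\in\{\pm1\}$ at most interchanges the sum and the difference, so $\Mp$ is unchanged. Surjectivity follows because $\eps$ is even: given $C$, put $A=\Phi^{-1}(C)$. If $\eps(A)=-1$, then also $\eps(-A)=-1$, so $f(-A)=\Phi(A)=C$. For the "only if" direction the plan is to transfer the unsigned data $\Mp$ into genuine geometric invariants of $f$, step by step.

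\emph{Step 1: basic invariants.} Taking $B=A$ gives $\max\{2\norm{f(A)}_p,\norm{f(A)-f(A)}_p\}=2\norm{A}_p$, so $\norm{f(A)}_p=\norm A_p$. In particular $f(0)=0$, since $\norm{f(0)}_p=0$. Taking $B=0$ gives nothing new. Next use the equality case of $\norm{A+B}_p^p\le\norm A_p^p+\norm B_p^p$. Both $\norm{A\pm B}_p^p$ are at most $\norm A_p^p+\norm B_p^p$, so $\Mp(A,B)^p=\norm A_p^p+\norm B_p^p$ holds iff one of $A\pm B$ attains equality. By the equality characterization this happens iff $A^*B=AB^*=0$, which is symmetric in the sign of $B$. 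Hence $f$ preserves two-sided support orthogonality $\orth$ in both directions, because surjectivity lets us run the argument for any preimage.

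\emph{Step 2: rank and rank-one geometry.} Rank is characterized order-theoretically: $\rank A=k$ iff the largest family of nonzero mutually $\orth$ matrices that are all $\orth$-orthogonal to $A$ has size $n-k$. Equivalently, one can use the dimension of the commutant-like set $A^{\orth}$. So $f$ preserves rank, and in particular maps rank-one matrices onto rank-one matrices. Singular values then follow by decomposing $A=\sum s_i x_iy_i^*$ into mutually orthogonal rank-one pieces. I would use the symmetrized rank-one spectral formula for $\Mp(A,R)$ with $R$ rank one, announced in the abstract, to show that $f$ respects such decompositions. For $R=t\,uv^*$, this formula expresses $\Mp(A,R)$ through $\abs{\ip{Av}{u}}$-type data and the compressions of $A$. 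On the rank-one manifold the formula shows that the quantities $\abs{\ip{x}{x'}}\,\abs{\ip{y}{y'}}$ (moduli) are preserved, together with the magnitude $\max\{\norm{R\pm R'}_p\}$. Applying Wigner-type and Uhlhorn-type arguments separately to the left and right factors, after extracting them via orthogonality classes (rank-one matrices sharing a left or right support line), yields unitaries or antiunitaries $U,V$. We also get the possible swap of left and right, which corresponds to the transpose and adjoint forms. Thus $f(xy^*)=\pm\Phi(xy^*)$ for one of the four standard maps $\Phi$.

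\emph{Step 3: extension via max-profile uniqueness.} Replace $f$ by $\Phi^{-1}\circ f$, so that $f(R)=\pm R$ on rank-one $R$. For a general $A$, the function $R\mapsto\Mp(A,R)$ on rank-one $R$ is invariant under $R\mapsto -R$, so it is the same for $f(A)$ and for $A$. The uniqueness principle to prove is that this max-profile determines $A$ up to sign: if $\Mp(A,R)=\Mp(B,R)$ for all rank-one $R$, then $B=\pm A$. I would attack it with small $R=t\,uv^*$ and $t\to0$. The term of order $t^p$ from kernel directions detects supports, and the order-$t$ term detects $\abs{\mathrm{Re}\,\ip{Av}{u}}$-type quantities, up to the symmetrization. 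Varying phases of $u$ recovers $\abs{\ip{Av}{u}}$ and then the real parts up to a global sign. This gives $f(A)=\eps(A)A$, and evenness of $\eps$ follows by comparing $f(A)$ and $f(-A)$ against the same profile.

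I expect Step 3 to be the main obstacle. The max removes a sign pointwise in $R$, so the real difficulty is showing that the local sign ambiguities glue into a single global sign for $A$. The first thing I would try is connectedness of the rank-one set together with an argument that the profile is nondegenerate near generic $R$. Once all matrices are handled there is no further extension to do, because $\mathcal S_p^n$ is finite-dimensional and no continuity is needed.
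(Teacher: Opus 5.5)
Your skeleton matches the paper's: norm and $\orth$-preservation via the equality case of the $p$-triangle inequality, rank-one recognition, rank-one rigidity, reduction to $f(R)=\pm R$, and a max-profile uniqueness theorem. The ``if'' direction, Step~1 and the rank count are fine. Step~3, however, is left open, and the quantity you propose to read off at first order is the wrong one.

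For a rank-one $e$ inside the common support corner $E\to F$, where $A$ is invertible, one has $\left.\frac{d}{dt}\right|_{t=0}\norm{A+te}_p^p=p\Re\Tr(J_p(A)^*e)$ with $J_p(A)=u_A\abs{A}^{p-1}$. So the profile gives $\abs{\Re\Tr(J_p(A)^*e)}=\abs{\Re\Tr(J_p(B)^*e)}$, not information about $\ip{Av}{u}$. At the end you must invert $J_p$ via $A=u_A\abs{J_p(A)}^{1/(p-1)}$.

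The supports are best obtained exactly rather than from $\abs t^p$ asymptotics: $A\orth e$ iff $\Mp(A,e)^p=\norm A_p^p+1$. Hence $A$ and $B$ have the same rank-one orthogonality locus, and so the same $\suppL,\suppR$.

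The sign gluing you flag is not a connectedness problem. For fixed $\eta\in E$, $\xi\mapsto\Re\Tr(J^*(\xi\otimes\eta))$ is a real-linear functional on $F$, and two real functionals with equal absolute values are equal or opposite. So $J_p(B)\eta=\tau(\eta)J_p(A)\eta$. Applying this to $\eta_1$, $\eta_2$ and $\eta_1+\eta_2$ with $\eta_1,\eta_2$ independent, invertibility of $J_p(A)$ forces $\tau$ to be constant (\Cref{lem:functional-phase-retrieval}).

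Evenness of $\eps$ also cannot come from comparing profiles. $A$ and $-A$ have identical profiles, and $A\mapsto\eps(A)A$ with non-even $\eps$ is a max-phase isometry that is not onto. Evenness comes from surjectivity: after normalization, only $\pm A$ can be sent to $\pm A$.

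Step~2 asserts its conclusion exactly where the real work lies. The moduli $\abs{\ip{x}{x'}}\abs{\ip{y}{y'}}$ and $D$ are projective data. Wigner on the two factors gives at best $f(xy^*)=c(x,y)\,(Ux)(Vy)^*$ with an unknown unimodular $c$. These data also cannot exclude mixed orientations: projectively, $[xy^*]\mapsto[\overline{x}y^*]$ preserves everything you list but is none of the four forms.

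What is needed is the phase-sensitive quantity $\Mp(R,R')$, which you mention but never use. Since complex lines are determined by their $\orth$-complements, $f$ maps each circle $\T e$ onto $\T f(e)$ by $\lambda\mapsto\pm\lambda^{\sigma(e)}$ (\Cref{lem:scalar} on each line). Then \Cref{lem:rankone-formula} gives $\abs{\Re(\overline\lambda\mu z)}=\abs{\Re(\overline{\lambda^{\sigma(e)}}\mu^{\sigma(v)}w)}$ for all $\lambda,\mu\in\T$, where $z=\Tr(e^*v)$ and $w=\Tr(f(e)^*f(v))$. This forces a common orientation and $w\in\{\pm z\}$ or $w\in\{\pm\overline z\}$. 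The paper then removes the pair-dependent signs by lifting through $\Umin\to\Umin/\{\pm1\}$ and invokes Moln\'ar's theorem.

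In your factorwise route, this identity would first rule out mixed orientations. Once $f(e)=c(e)\Psi(e)$ with a standard $\Psi$, it would give $c(v)=\pm c(e)$ whenever $\Tr(e^*v)\ne0$, hence $c\equiv\pm c_0$ by connectedness of that graph. You would also have to show that left and right fibres go to fibres. One way is as maximal cliques of the preserved relation ``one of the two transition moduli equals $1$'' (the unordered pair of moduli is recoverable from $D$ and $\abs z$). None of this is in the proposal, so as written Step~2 does not yet produce $\Phi$.
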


The sign freedom in this theorem is unavoidable: changing the sign of
one argument leaves the max-phase kernel unchanged.  The theorem shows
that this pointwise ambiguity is the only obstruction to a standard
real-linear Schatten isometry.  In particular, no auxiliary regularity
condition is needed for the finite-dimensional classification.

The argument rests on three complementary ingredients.  First, equality
in the Schatten $p$-triangle inequality is used to recover two-sided
support orthogonality and identify rank-one operators intrinsically.
Second, a symmetrized spectral analysis of rank-one perturbations extracts
the transition quantity $|\Tr(e^*v)|$ from their max-phase profiles; this
reduces the rank-one action to a projective Wigner-type rigidity problem.
Third, we establish a max-profile uniqueness principle: an operator is
determined up to sign by the values of its max-phase kernel against all
real multiples of rank-one partial isometries.  Combining these steps
passes from rank-one rigidity to the classification of arbitrary matrices.
The profile argument is especially important below one, where a single
max-phase equality need not determine the direction of a vector.

We also consider arbitrary complex Hilbert spaces.  Without imposing
continuity, the same geometric analysis determines the action on the
finite-rank ideal and yields preservation of finite rank and two-sided
support orthogonality.  For continuous surjective max-phase isometries,
density of finite-rank operators then gives a full classification, with
a global rather than pointwise sign.  The remaining issue for general
discontinuous maps in infinite dimension is isolated in a conjecture,
whose resolution calls for a suitable extension of max-profile uniqueness
to infinite-rank operators.

The paper is organized as follows.  Section~\ref{sec:quasi-geometry}
develops the support geometry and rank-one phase-retrieval tools.
Section~\ref{sec:classification} proves the classification theorems in
finite dimensions and on the finite-rank ideal.  Section~\ref{sec:consequences}
discusses the geometric consequences, the continuous infinite-dimensional
case, and the outstanding general question.

\section{Support geometry and rank-one phase retrieval}
\label{sec:quasi-geometry}

Let $H$ be a nonzero complex Hilbert space.  For $0<p<1$, the Schatten
quasi-class is
\[
 \Sp(H)=\left\{A\in\mathcal K(H):
       \sum_{j\ge1}s_j(A)^p<\infty\right\},
 \qquad
 \norm A_p=\left(\sum_{j\ge1}s_j(A)^p\right)^{1/p}.
\]
We use the standard singular-value and trace-ideal framework of
\cite{McCarthy1967,Simon2005}; for the noncommutative $L_p$ setting and its
subspace geometry, see \cite{FackKosaki1986,RaynaudXu2003}.
It is complete for the translation-invariant metric
$d_p(A,B)=\norm{A-B}_p^p$.  The fundamental inequality is
\begin{equation}\label{eq:p-triangle}
 \norm{A+B}_p^p\le \norm A_p^p+\norm B_p^p.
\end{equation}
This is the Schatten form of the familiar subadditivity principle for
concave trace functions; compare \cite{Bourin2010}.

For $A\in\Sp(H)$, let $\suppL(A)$ and
$\suppR(A)$ denote the orthogonal projections
onto $\overline{\operatorname{ran}A}$ and
$\overline{\operatorname{ran}A^*}$, respectively.
Following the standard notion of disjoint
supports for operators \cite{BottazziConde2019},
we write
\begin{equation}\label{eq:support-orth}
	A\orth B
	\quad\Longleftrightarrow\quad
	A^*B=0\ \text{and}\ AB^*=0.
\end{equation}
Equivalently, the left supports of $A$ and $B$
are orthogonal, and so are their right supports.
For $0<p<1$, this relation admits a
characterization in terms of equality in the
$p$-triangle inequality, as established in
the following lemma.

The equality case of \eqref{eq:p-triangle} is the geometric replacement for
strict convexity.

The following characterization of support
orthogonality follows from
\cite[Lemma A.1]{HuangZhu2026}.
We record its Schatten-class formulation
for later use.

\begin{lemma}\label{lem:p-additivity}
	Let $0<p<1$ and $A,B\in\Sp(H)$. Then
	\begin{equation}\label{eq:equality-ptriangle}
		\norm{A+B}_p^p
		=\norm A_p^p+\norm B_p^p
		\quad\Longleftrightarrow\quad A\orth B.
	\end{equation}
	The same equivalence holds with $B$
	replaced by $-B$.
\end{lemma}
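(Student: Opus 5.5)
The easy direction is ``$\Leftarrow$''. If $A\orth B$, the left supports $\suppL(A),\suppL(B)$ are orthogonal, and so are the right supports $\suppR(A),\suppR(B)$. Then $(A+B)^*(A+B)=A^*A+B^*B$, because the cross terms $A^*B$ and $B^*A$ vanish. The operators $A^*A$ and $B^*B$ have orthogonal ranges contained in $\suppR(A)$ and $\suppR(B)$, so $|A+B|=|A|+|B|$ is an orthogonal direct sum. Hence the nonzero singular values of $A+B$ are the union, with multiplicity, of those of $A$ and of $B$, and $\norm{A+B}_p^p=\norm A_p^p+\norm B_p^p$. The statement with $-B$ follows at once, since $A\orth B$ if and only if $A\orth(-B)$ and $\norm{-B}_p=\norm B_p$.

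For ``$\Rightarrow$'' I would reduce to \cite[Lemma A.1]{HuangZhu2026}, the equality case of the Schatten $p$-triangle inequality, and check that its conclusion is exactly $A^*B=AB^*=0$. If one wants a self-contained route, I would use the Hermitian dilation
\[
\widetilde X=\begin{pmatrix}0&X\\X^*&0\end{pmatrix},
\]
whose eigenvalues are $\pm s_j(X)$, so that $\norm{\widetilde X}_p^p=2\norm X_p^p$. Then apply the known equality case of Rotfel'd/McCarthy-type subadditivity for the concave function $t\mapsto t^p$ on self-adjoint compact operators. That inequality reads $\Tr|S+T|^p\le\Tr|S|^p+\Tr|T|^p$, and equality for strictly concave $t^p$ with $0<p<1$ forces $|S||T|=0$, hence $ST=0$ in the appropriate sense.

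Translating $\widetilde A\widetilde B=0$ back gives
\[
\widetilde A\widetilde B=\begin{pmatrix}AB^*&0\\0&A^*B\end{pmatrix}=0,
\]
which is precisely $A\orth B$.

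The main obstacle is the equality case of the subadditivity inequality itself. The standard proof goes through majorization: $s(S+T)\prec_w s(S)\oplus s(T)$ in the appropriate sense, combined with strict concavity of $t^p$. Extracting orthogonality from equality requires care with the infinite-dimensional compact setting. There I would approximate by finite-rank compressions onto a finite-dimensional subspace containing the relevant eigenvectors. Since the paper explicitly cites \cite[Lemma A.1]{HuangZhu2026} for this, I would rely on that lemma for this step and only verify the translation above.
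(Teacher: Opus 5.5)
Your proposal is correct and is essentially the paper's proof, which simply invokes \cite[Lemma A.1]{HuangZhu2026} for the whole equivalence. Your direct check of ``$\Leftarrow$'' via $\abs{A+B}=\abs A+\abs B$ and the dilation identity $\widetilde A\widetilde B=\operatorname{diag}(AB^*,A^*B)$ are accurate additions, but the optional self-contained sketch would not stand alone as written: for the concave function $t^p$ the useful majorization runs the other way (for positive matrices $S,T$ one has $s(S)\oplus s(T)\prec s(S+T)\oplus 0$, whereas $s(S+T)\prec_w s(S)\oplus s(T)$ already fails when $S=T$ is a rank-one projection), and the equality case does not in general pass to finite-rank compressions; since you rely on the citation for that step, your argument is unaffected.
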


\begin{proof}
	The equivalence is a direct application of
	\cite[Lemma A.1]{HuangZhu2026} to the
	Schatten class $\Sp(H)$. The assertion
	for $-B$ follows by replacing $B$ with
	$-B$.
\end{proof}

\begin{definition}\label{def:maxphase}
A map $f:\Sp(H_1)\to\Sp(H_2)$ is a \emph{max-phase isometry} if
\[
 \Mp(f(A),f(B))=\Mp(A,B)\qquad(A,B\in\Sp(H_1)),
\]
where $\Mp$ is the max-phase kernel defined in the introduction.  No linearity,
continuity, or injectivity is included in the definition.
\end{definition}

\begin{proposition}\label{prop:norm-orth}
Let $0<p<1$ and let $f:\Sp(H_1)\to\Sp(H_2)$ be a surjective max-phase
isometry.  Then
\begin{enumerate}[label=\textup{(\roman*)}]
 \item $f(0)=0$ and $\norm{f(A)}_p=\norm A_p$ for every $A$;
 \item $A\orth B$ if and only if $f(A)\orth f(B)$;
 \item for $A\ne0$, the partially ordered family of orthogonality
 complements above $A^\perp$ is carried onto the corresponding family above
 $f(A)^\perp$.
\end{enumerate}
Here $A^\perp:=\{B\in\Sp(H_1):A\orth B\}$.
\end{proposition}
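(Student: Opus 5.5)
We prove the three items in order; each one builds on the previous.

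\emph{Step (i).} Put $A=B$ in the defining identity. Then
\[
\Mp(f(A),f(A))=\max\{2\norm{f(A)}_p,0\}=2\norm{f(A)}_p ,
\qquad
\Mp(A,A)=2\norm A_p .
\]
Hence $\norm{f(A)}_p=\norm A_p$ for every $A$. Taking $A=0$ gives $f(0)=0$. Surjectivity is not needed here.

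\emph{Step (ii).} First note that $\Mp(A,B)^p=\max\{\norm{A+B}_p^p,\norm{A-B}_p^p\}$. By \eqref{eq:p-triangle}, both entries are at most $\norm A_p^p+\norm B_p^p$. So
\[
\Mp(A,B)^p\le \norm A_p^p+\norm B_p^p .
\]
By Lemma~\ref{lem:p-additivity}, equality holds exactly when one of the two entries attains the bound. For the $+$ entry this means $A\orth B$. For the $-$ entry it means $A\orth(-B)$, which is the same relation, since $\orth$ is invariant under $B\mapsto -B$.

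This gives a metric description of orthogonality:
\[
A\orth B \iff \Mp(A,B)^p=\norm A_p^p+\norm B_p^p .
\]
Both sides of this criterion are preserved by $f$: the kernel by hypothesis, the norms by (i). Therefore $A\orth B\iff f(A)\orth f(B)$. Surjectivity is again not used.

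\emph{Step (iii).} Here surjectivity enters. For a set $S$, write $S^\perp=\{B:B\orth C\ \text{for all } C\in S\}$. We interpret the family as all sets $S^{\perp}$ with $S\subseteq A^\perp$, ordered by inclusion; these are the orthogonality complements lying above $A^\perp$ in the sense of the closure operator $S\mapsto S^{\perp\perp}$.

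From (ii) and surjectivity, for every set $S$ we get
\[
f(S^\perp)=f(S)^\perp .
\]
The inclusion $\subseteq$ is (ii) applied pointwise. For $\supseteq$, take $Y\in f(S)^\perp$ and use surjectivity to write $Y=f(X)$. Then $f(X)\orth f(C)$ for all $C\in S$, so (ii) gives $X\orth C$ for all $C\in S$, i.e.\ $X\in S^\perp$. Note that $f$ need not be injective. We only use that membership in $S^\perp$ depends on $X$ through the relation $\orth$, which (ii) transfers in both directions.

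Consequences:
\begin{enumerate}
\item Taking $S=\{A\}$ gives $f(A^\perp)=f(A)^\perp$.
\item Applying the identity twice gives $f(S^{\perp\perp})=f(S)^{\perp\perp}$.
\item The map $\mathcal C\mapsto f(\mathcal C)$ sends complements to complements and preserves inclusion.
\item Its inverse is $\mathcal D\mapsto f^{-1}(\mathcal D)$. One checks that $f^{-1}(T^\perp)=\bigl(f^{-1}(T)\bigr)^\perp$, again by (ii) and surjectivity.
\end{enumerate}
Thus $f$ induces an order isomorphism from the complements containing $A^\perp$ onto those containing $f(A)^\perp$.

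\emph{Main obstacle.} The substantive points are the orthogonality criterion in (ii) and checking that $f$ and $f^{-1}$ really act as mutually inverse maps on complements without assuming injectivity. The second point rests on $X\in S^\perp\iff f(X)\in f(S)^\perp$, which follows directly from (ii). Everything else is bookkeeping.
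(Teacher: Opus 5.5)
Your argument is correct. For (i) and (ii) it is the paper's argument; the paper gets $f(0)=0$ from $A=B=0$ rather than from norm preservation, which makes no difference.

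For (iii) the routes differ in scope. The paper reads ``orthogonality complements'' as the single-element sets $B^\perp$. It checks $A^\perp\subseteq B^\perp\iff f(A)^\perp\subseteq f(B)^\perp$ directly, by writing an arbitrary $W\in f(A)^\perp$ as $W=f(C)$. You instead prove $f(S^\perp)=f(S)^\perp$ for arbitrary sets $S$ and observe that complements are saturated for the fibres of $f$. From this you obtain an order isomorphism of the entire lattice of complements $S^\perp$.

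The key mechanism, surjectivity plus (ii), is the same in both. Your version contains the paper's as the case $S=\{B\}$, which is the case actually used in Lemma~\ref{lem:rankone-recognition}; there one also needs $B\neq0\iff f(B)\neq0$, which follows from (i). The paper's version is shorter and tailored to that use. Yours makes explicit why non-injectivity of $f$ causes no trouble.

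One slip to fix: the sets $S^\perp$ with $S\subseteq A^\perp$ are the complements \emph{containing $A$}, equivalently those containing $A^{\perp\perp}$. They are not the complements containing $A^\perp$. By the Galois connection, $S^\perp\supseteq A^\perp$ holds iff $S\subseteq A^{\perp\perp}$. This misstatement does not affect the proof, because your isomorphism is established globally and your final sentence refers to the correct family.
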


\begin{proof}
Putting $A=B=0$ in the max identity gives $f(0)=0$.  Putting $B=A$ gives
$2\norm{f(A)}_p=2\norm A_p$.

By \eqref{eq:p-triangle} and \Cref{lem:p-additivity},
\begin{equation}\label{eq:orth-max-character}
 A\orth B
 \quad\Longleftrightarrow\quad
 \Mp(A,B)^p=\norm A_p^p+\norm B_p^p.
\end{equation}
The max identity and norm preservation therefore prove (ii).

Finally, surjectivity and (ii) give
$f(A^\perp)=f(A)^\perp$.  If $A^\perp\subseteq B^\perp$ and
$W\in f(A)^\perp$, write $W=f(C)$.  Then $A\orth C$, hence $B\orth C$,
and therefore $f(B)\orth W$.  This proves
$f(A)^\perp\subseteq f(B)^\perp$; the converse follows in the same way.
\end{proof}

The order information in \Cref{prop:norm-orth} recognizes the atoms of the
operator geometry.

\begin{lemma}\label{lem:rankone-recognition}
Assume $\dim H\ge2$ and $0\ne A\in\Sp(H)$.  The following are equivalent.
\begin{enumerate}[label=\textup{(\roman*)}]
 \item $\rank A=1$.
 \item $A^\perp$ is maximal among the proper sets $B^\perp$ with $B\ne0$.
\end{enumerate}
Consequently, every surjective max-phase isometry preserves rank-one
operators in both directions.
\end{lemma}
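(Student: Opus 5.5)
The plan is to describe $A^\perp$ through the supports. Write $P=\suppL(A)$ and $Q=\suppR(A)$. Then $C\in A^\perp$ if and only if $\suppL(C)\le 1-P$ and $\suppR(C)\le 1-Q$, so that
\[
 A^\perp=\{C\in\Sp(H): (1-P)C(1-Q)=C\}.
\]
Hence $A^\perp$ depends only on the pair $(P,Q)$. The inclusion $A^\perp\subseteq B^\perp$ holds if and only if $\suppL(B)\le P$ and $\suppR(B)\le Q$. One direction is immediate. For the converse, suppose $\suppL(B)\not\le P$. Then we can find a unit vector $\xi\perp\operatorname{ran}(P)$ with $B^*\xi\ne0$, and a unit vector $\eta\in\operatorname{ran}(1-Q)$ with $\eta$ orthogonal to $\operatorname{ran}(1-\suppR(B))$ failing, i.e.\ with $B\eta\neq 0$ suitably. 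The cleanest route is to take $C=\xi\otimes\eta$ with $\xi\in\operatorname{ran}(1-P)$ and $\eta\in\operatorname{ran}(1-Q)$. Then $C\in A^\perp$, and $C\orth B$ requires $B^*\xi=0$ and $B\eta=0$. So $A^\perp\subseteq B^\perp$ forces, for every such $\xi$ and $\eta$, that $B^*\xi=0$ or $B\eta=0$. Fixing one $\eta$ with $B\eta\ne0$ (if any exists), we get $B^*\xi=0$ for all $\xi\perp\operatorname{ran}P$. This requires some care with the degenerate cases where $1-P$ or $1-Q$ is zero; in those cases $A^\perp=\{0\}$.

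\textbf{Step 1 (properness).} The set $A^\perp$ is proper, i.e.\ $\ne\Sp(H)$, whenever $A\ne0$, since $A\notin A^\perp$. Thus ``proper'' is automatic for $B\ne0$.

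\textbf{Step 2 ((i)$\Rightarrow$(ii)).} Let $A=x\otimes y$ have rank one, and suppose $A^\perp\subseteq B^\perp$ with $B\ne0$. We want $A^\perp=B^\perp$. Because $\dim H\ge2$, the unit vectors $\xi\perp x$ and $\eta\perp y$ exist, so the rank-one witnesses $\xi\otimes\eta$ lie in $A^\perp$. The argument above then gives $B^*\xi=0$ for all $\xi\perp x$, or $B\eta=0$ for all $\eta\perp y$.
\begin{itemize}
 \item In the first case $\operatorname{ran}B\subseteq\C x$, so $B=x\otimes z$.
 \item Now apply $B\orth \xi\otimes\eta$ for all $\xi\perp x$ and $\eta\perp y$. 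This needs $B\eta=0$, i.e.\ $\ip{\eta}{z}=0$ for all $\eta\perp y$, so $z\in\C y$.
 \item Hence $B$ is a nonzero multiple of $A$, and $B^\perp=A^\perp$.
\end{itemize}
The second case is symmetric. Thus $A^\perp$ is maximal.

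\textbf{Step 3 ((ii)$\Rightarrow$(i)).} Suppose $\rank A\ge2$. Pick a unit vector $y\in\operatorname{ran}\suppR(A)$ and set $B=(Ay)\otimes y$, which is rank one and nonzero, so $\suppL(B)\le P$ and $\suppR(B)\le Q$. This gives $A^\perp\subseteq B^\perp$. The inclusion is strict: choose a unit vector $y'\perp y$ in $\operatorname{ran}Q$, which exists since $\rank Q=\rank A\ge2$, and let $u$ be a unit vector in $\operatorname{ran}P$ orthogonal to $Ay$, which exists since $\rank P\ge2$. Then $C=u\otimes y'$ satisfies $C\orth B$ but $C\notin A^\perp$. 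So $A^\perp$ is not maximal.

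\textbf{Consequence.} By \Cref{prop:norm-orth}(iii), $f$ induces an order isomorphism $B^\perp\mapsto f(B)^\perp$ on the family of complements. This map sends nonzero elements to nonzero elements, by norm preservation in \Cref{prop:norm-orth}(i), and proper sets to proper sets. Maximality is therefore preserved. Since surjectivity lets us apply the same argument to preimages, rank-one operators are preserved in both directions.

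The main obstacle is the characterization of the inclusion $A^\perp\subseteq B^\perp$ via supports, specifically the ``or'' dichotomy produced by the rank-one witnesses. I handle it by first choosing $\eta$ with $B\eta\ne0$ and then running over $\xi$.
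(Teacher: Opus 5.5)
Your proof is correct and follows essentially the same route as the paper. For $\rank A\ge2$ you take the rank-one operator $B=(Ay)\otimes y$, whose supports lie inside those of $A$, and exhibit the witness $u\otimes y'\in B^\perp\setminus A^\perp$; for $A=x\otimes y$ you test the inclusion on $\xi\otimes\eta$ with $\xi\perp x$, $\eta\perp y$ to force $B\in\C A$, and then transport maximality through \Cref{prop:norm-orth}. The ``or'' dichotomy is an unneeded detour, since $\xi\otimes\eta\orth B$ already gives $B^*\xi=0$ and $B\eta=0$ simultaneously; also, your preliminary claim that $A^\perp\subseteq B^\perp$ is equivalent to $\suppL(B)\le P$ and $\suppR(B)\le Q$ is false when exactly one of $1-P,1-Q$ vanishes (then $A^\perp=\{0\}$), but you only ever use the valid direction.
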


\begin{proof}
Suppose first that $\rank A\ge2$.  Choose a rank-one partial isometry $e$
whose left and right supports are subprojections of $\suppL(A)$ and
$\suppR(A)$, respectively.  Every operator orthogonal to $A$ is orthogonal
to $e$, so $A^\perp\subseteq e^\perp$.  The inclusion is strict: choose a
second pair of initial and final unit vectors in the supports of $A$,
orthogonal to those of $e$, and form the corresponding rank-one operator.

Conversely, let $A$ have rank one and suppose
$A^\perp\subseteq B^\perp$ with $B\ne0$.  Testing the inclusion on
rank-one operators $\xi\otimes\eta$ with
$\xi\perp\suppL(A)H$ and $\eta\perp\suppR(A)H$ gives
\[
 \suppL(B)\le\suppL(A),\qquad \suppR(B)\le\suppR(A).
\]
Both right-hand projections have rank one, hence $B$ belongs to the same
one-dimensional complex corner as $A$, and $B^\perp=A^\perp$.  The final
assertion follows from \Cref{prop:norm-orth}(iii).
\end{proof}

The same order also detects every finite rank.  We shall use the following
form later.

\begin{lemma}\label{lem:finite-rank-recognition}
For $A\ne0$, $\rank A$ is the largest cardinality of a family
$e_1,\dots,e_m$ of mutually support-orthogonal rank-one partial isometries
satisfying $A^\perp\subseteq e_j^\perp$ for every $j$.  In particular, a
surjective max-phase isometry preserves every finite rank.
\end{lemma}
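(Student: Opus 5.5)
The plan is first to describe the condition $A^\perp\subseteq e^\perp$ concretely, then to prove the two inequalities between $\rank A$ and the largest admissible family, and finally to transfer everything through $f$ using \Cref{prop:norm-orth} and \Cref{lem:rankone-recognition}.

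\textbf{Characterizing the inclusion.} Let $e=\xi\otimes\eta$ be a rank-one partial isometry, with $\xi,\eta$ unit vectors. We claim that
\[
A^\perp\subseteq e^\perp\iff \xi\in\suppL(A)H\ \text{and}\ \eta\in\suppR(A)H .
\]
For the backward implication: if $B\orth A$, then the left and right supports of $B$ are orthogonal to those of $A$. They are therefore orthogonal to $\xi$ and $\eta$, so $B\orth e$.

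For the forward implication we argue by contraposition, as in the proof of \Cref{lem:rankone-recognition}. Suppose $\xi\notin\suppL(A)H$. Put $\zeta=(1-\suppL(A))\xi\neq0$, and choose any unit vector $\omega\perp\suppR(A)H$. Then $\zeta\otimes\omega\orth A$, but $\zeta\otimes\omega$ is not support-orthogonal to $e$, because $\ip{\zeta}{\xi}\ne0$. Hence $A^\perp\not\subseteq e^\perp$. The case $\eta\notin\suppR(A)H$ is symmetric.

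This step needs a nonzero vector orthogonal to $\suppR(A)H$. If $\suppR(A)=1$, then $A^\perp$ consists only of operators with zero right support, that is, $A^\perp=\{0\}$. We treat this as a separate case: use test operators with one leg in the complement of the offending support and the other leg arbitrary, and check the orthogonality relation on each side separately. This degenerate case (full support, which occurs for instance when $H$ is finite-dimensional and $A$ is invertible) is the point I expect to need the most care. In that case the inclusion $A^\perp\subseteq e^\perp$ holds trivially for every $e$. So the argument must produce the bound $m\le\rank A$ from the mutual orthogonality of the family rather than from the inclusion, and it does, as the next step shows.

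\textbf{The upper bound.} Let $e_j=\xi_j\otimes\eta_j$, $j=1,\dots,m$, be mutually support-orthogonal and satisfy $A^\perp\subseteq e_j^\perp$. Mutual support orthogonality means the $\xi_j$ are orthonormal and the $\eta_j$ are orthonormal. When $A^\perp\ne\{0\}$, the characterization puts all $\xi_j$ in $\suppL(A)H$, so $m\le\dim\suppL(A)H=\rank A$.

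In the degenerate case $\suppL(A)=1$ or $\suppR(A)=1$, the ambient space $H$ has dimension at most $\rank A$ on that side. The orthonormal $\xi_j$ (or $\eta_j$) then again force $m\le\rank A$. If $\rank A=\infty$, the bound is vacuous.

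\textbf{The lower bound.} Take the singular value decomposition $A=\sum_k s_k\,u_k\otimes v_k$. For each $k$ set $e_k=u_k\otimes v_k$. These are mutually support-orthogonal, and $u_k\in\suppL(A)H$, $v_k\in\suppR(A)H$. By the characterization (or trivially in the degenerate case), $A^\perp\subseteq e_k^\perp$ for every $k$. This gives $\rank A$ such elements; if $\rank A=\infty$, it gives families of every finite size. So the largest cardinality equals $\rank A$.

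\textbf{Transfer through $f$.} Let $f$ be a surjective max-phase isometry. By \Cref{lem:rankone-recognition}, $f$ maps rank-one operators onto rank-one operators in both directions. A rank-one partial isometry is exactly a rank-one operator of norm $1$, and $f$ preserves norms by \Cref{prop:norm-orth}(i). Hence $f$ maps rank-one partial isometries bijectively onto rank-one partial isometries, up to the surjectivity bookkeeping, which again comes from \Cref{prop:norm-orth}.

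By \Cref{prop:norm-orth}(ii), $f$ preserves mutual support orthogonality in both directions. By the order-preservation in \Cref{prop:norm-orth}(iii), it preserves the inclusion $A^\perp\subseteq e^\perp$ in both directions. Pulling back a family for $f(A)$ is done by choosing preimages of its members. Therefore $f$ carries admissible families for $A$ to admissible families for $f(A)$ and conversely, so $\rank f(A)=\rank A$ whenever the rank is finite.
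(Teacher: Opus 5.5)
Your proposal is correct and follows the paper's route: characterize $A^\perp\subseteq e^\perp$ by support containment, count orthonormal families in the two support spaces, and transfer the count through \Cref{prop:norm-orth} and \Cref{lem:rankone-recognition}. Your separate handling of the case $A^\perp=\{0\}$ is warranted. The paper's one-line equivalence fails when exactly one support of $A$ is full, which can happen only in infinite dimensions and at infinite rank, e.g.\ for an injective compact operator with non-dense range; your argument avoids relying on it there. The remark about test operators with an arbitrary leg can be deleted, since such operators need not lie in $A^\perp$. Also, ``bijectively'' should read ``onto'', as injectivity of $f$ is not yet available at this stage.
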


\begin{proof}
The inclusion $A^\perp\subseteq e^\perp$ is equivalent to
$\suppL(e)\le\suppL(A)$ and $\suppR(e)\le\suppR(A)$.  Thus an orthogonal
family of such $e$'s is exactly a pair of orthonormal families in the left
and right support spaces.  Its maximal cardinality is $\rank A$.  Apply
\Cref{prop:norm-orth,lem:rankone-recognition} to obtain invariance.
\end{proof}

We now develop the two technical ingredients needed to pass from support
geometry to general matrices.  Throughout the remainder of this section,
$0<p<1$.

Let $\Umin(H)$ denote the set of rank-one partial isometries; we use the
standard support-space terminology of \cite{HalmosMcLaughlin1963}.  If
$e,v\in\Umin(H)$, write
\[
 e=\xi\otimes\eta,
 \qquad v=\zeta\otimes\theta,
\]
with all four vectors of norm one, and set
\begin{equation}\label{eq:zD}
 z(e,v)=\Tr(e^*v),
 \qquad
 D(e,v)=\bigl(1-|\ip{\xi}{\zeta}|^2\bigr)
          \bigl(1-|\ip{\eta}{\theta}|^2\bigr).
\end{equation}
The number $D(e,v)$ is independent of the chosen vector representations.

For $0\le D\le1$ and $s\ge2\sqrt D$, define
\begin{equation}\label{eq:Phi}
 \Phi_{p,D}(s)=
 \left(\frac{s+\sqrt{s^2-4D}}2\right)^{p/2}
 +
 \left(\frac{s-\sqrt{s^2-4D}}2\right)^{p/2}.
\end{equation}

The following monotonicity properties
are contained in
\cite[Lemma 3.3 and the proof of
Lemma 3.4]{HuangZhu2026}.
We record them for later use.

\begin{lemma}\label{lem:Phi-monotone}
	For fixed $D$, the function
	$s\mapsto\Phi_{p,D}(s)$ is strictly
	increasing on $[2\sqrt D,\infty)$.
	For fixed $s=2$, the function
	$D\mapsto\Phi_{p,D}(2)$ is strictly
	monotone when $p\ne2$.
\end{lemma}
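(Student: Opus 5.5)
The plan is to parametrize $\Phi_{p,D}$ by the two roots of a quadratic and reduce both claims to elementary one-variable monotonicity. For $s\ge 2\sqrt D$ put
\[
 \lambda_\pm=\frac{s\pm\sqrt{s^2-4D}}2 ,
\]
so that $\lambda_+\ge\lambda_-\ge0$, $\lambda_++\lambda_-=s$ and $\lambda_+\lambda_-=D$. With $q=p/2>0$ this gives $\Phi_{p,D}(s)=\lambda_+^{q}+\lambda_-^{q}$. Both assertions then become statements about how the pair $(\lambda_+,\lambda_-)$ moves. In each case I will prove strict monotonicity on the open interval by differentiation and extend it to the closed interval by continuity of $\Phi_{p,D}$ at the endpoints.

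\emph{Monotonicity in $s$.} Fix $D$ and write $R=\sqrt{s^2-4D}$, which is positive for $s>2\sqrt D$. Differentiating gives
\[
 \frac{d\lambda_+}{ds}=\frac{1+s/R}{2}=\frac{\lambda_+}{R},
 \qquad
 \frac{d\lambda_-}{ds}=\frac{1-s/R}{2}=-\frac{\lambda_-}{R}.
\]
Hence, for $D>0$,
\[
 \frac{d}{ds}\Phi_{p,D}(s)=\frac{q\,(\lambda_+^{q}-\lambda_-^{q})}{R}.
\]
This is strictly positive because $\lambda_+>\lambda_-$ whenever $R>0$. The case $D=0$ is handled separately: there $\lambda_-=0$ and $\Phi_{p,0}(s)=s^{q}$, which is plainly strictly increasing. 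Since $\Phi_{p,D}$ is continuous on $[2\sqrt D,\infty)$, strict increase on the open interval extends to the closed one.

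\emph{Monotonicity in $D$ at $s=2$.} Here $\lambda_-=2-\lambda_+$. As $D$ runs over $[0,1]$, the root $\lambda_+=1+\sqrt{1-D}$ runs strictly decreasingly over $[1,2]$. So it suffices to show that
\[
 g(t)=t^{q}+(2-t)^{q}
\]
is strictly monotone on $[1,2]$. Its derivative is $g'(t)=q\,\bigl(t^{q-1}-(2-t)^{q-1}\bigr)$, and for $1<t<2$ we have $t>2-t$. If $q<1$, which covers the whole range $0<p<1$ (indeed all $p<2$), then $g'(t)<0$. If $q>1$, then $g'(t)>0$. In either case $g$ is strictly monotone on $(1,2)$, and continuity extends this to $[1,2]$. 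Composing with the strictly decreasing map $D\mapsto\lambda_+$ gives strict monotonicity of $D\mapsto\Phi_{p,D}(2)$ whenever $p\neq2$. For $0<p<1$ the function is strictly increasing in $D$.

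The only delicate point is the endpoints. At $D=0$ the derivative blows up because $\lambda_-^{q-1}$ is singular at $\lambda_-=0$. At $s=2\sqrt D$, in particular $D=1$ when $s=2$, we have $R=0$. Both are handled by continuity rather than by differentiating there, so I do not expect a genuine obstacle.
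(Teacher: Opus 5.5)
Your proof is correct. It differs from the paper mainly in that the paper gives no argument here: it attributes the lemma to \cite[Lemma 3.3 and the proof of Lemma 3.4]{HuangZhu2026}, so your proof is a self-contained replacement for that citation.

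Your key step is to write $\Phi_{p,D}(s)=\lambda_+^{q}+\lambda_-^{q}$, where $\lambda_\pm$ are the roots of $x^2-sx+D$. These are exactly the eigenvalues $t_1,t_2$ that appear in the proof of \Cref{lem:rankone-formula}. This reduces both claims to one-variable calculus. The identity $\frac{d}{ds}\Phi_{p,D}(s)=q(\lambda_+^{q}-\lambda_-^{q})/R$ also shows why the growth of $\lambda_+$ outweighs the decrease of $\lambda_-$.

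The citation is shorter, but it leaves the reader to match the cited statements to the present definition of $\Phi_{p,D}$. Your route costs a few lines and gains the following:
\begin{itemize}
\item it works for every $p>0$, not only $0<p<1$;
\item it identifies the direction of the $D$-monotonicity: increasing for $p<2$, decreasing for $p>2$, and constant for $p=2$;
\item it handles the degenerate endpoints explicitly, namely $D=0$ (where $\lambda_-=0$) and $s=2\sqrt D$ (where $R=0$), by continuity.
\end{itemize}

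Both of your monotonicity statements are exactly what \Cref{lem:rankone-formula} uses. First $D$ is recovered from the minimum $\Phi_{p,D}(2)$, and then $|z|$ is recovered from the maximum $\Phi_{p,D}(2+2|z|)$.
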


\begin{lemma}\label{lem:rankone-formula}
For $e,v\in\Umin(H)$ and $\lambda\in\T$,
\begin{equation}\label{eq:sym-rankone-formula}
 \Mp(\lambda e,v)^p
 =\Phi_{p,D(e,v)}
   \left(2+2\abs{\Re\bigl(\lambda\overline{z(e,v)}\bigr)}\right).
\end{equation}
Consequently, the function $\lambda\mapsto\Mp(\lambda e,v)$ determines
both $D(e,v)$ and $|\Tr(e^*v)|$.
\end{lemma}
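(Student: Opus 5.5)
We reduce the computation to a $2\times2$ Gram matrix. Write $e=\xi\otimes\eta$ and $v=\zeta\otimes\theta$. The operators $T_\pm=\lambda e\pm v$ have rank at most two. They live on $\operatorname{span}\{\xi,\zeta\}$ and $\operatorname{span}\{\eta,\theta\}$, so each has at most two nonzero singular values $\sigma_1\ge\sigma_2$. Then
\[
 \norm{T_\pm}_p^p=(\sigma_1^2)^{p/2}+(\sigma_2^2)^{p/2}.
\]
Here $\sigma_1^2,\sigma_2^2$ are the roots of $x^2-\Tr(T^*T)\,x+\det_2(T^*T)=0$, where $\det_2$ denotes the product of the two nonzero eigenvalues (the second elementary symmetric function). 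Hence it suffices to show two things for $T=\lambda e\pm v$:
\[
 \Tr(T^*T)=2\pm2\Re\bigl(\lambda\overline{z}\bigr),\qquad \det\nolimits_2(T^*T)=D(e,v).
\]
With these, $\norm{T_\pm}_p^p=\Phi_{p,D}(2\pm2\Re(\lambda\bar z))$ by \eqref{eq:Phi}.

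For the trace, expand:
\[
 \Tr(T^*T)=\norm{\lambda e}_2^2+\norm v_2^2\pm2\Re\Tr\bigl((\lambda e)^*v\bigr)=2\pm2\Re\bigl(\bar\lambda z\bigr).
\]
Since $\abs{\Re(\bar\lambda z)}=\abs{\Re(\lambda\bar z)}$, this matches the formula up to the harmless relabelling of which sign is larger.

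For the second invariant, use the Cauchy–Binet identity: $\det_2(T^*T)$ equals the squared norm of $\wedge^2T$, i.e. the sum of $|2\times2 \text{ minors}|^2$. Write
\[
 T=\lambda\,\xi\otimes\eta\pm\zeta\otimes\theta=X\,\mathrm{diag}(\lambda,\pm1)\,Y^*,
\]
where $X=[\xi\ \zeta]$ and $Y=[\eta\ \theta]$. Then $\wedge^2T=\det(\mathrm{diag})\,(\xi\wedge\zeta)\otimes(\eta\wedge\theta)$. Its squared norm is
\[
 \norm{\xi\wedge\zeta}^2\,\norm{\eta\wedge\theta}^2=(1-|\ip\xi\zeta|^2)(1-|\ip\eta\theta|^2)=D,
\]
since $|\det(\mathrm{diag})|=1$. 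The degenerate cases of linearly dependent vectors are covered automatically, because then $D=0$ and $T$ has rank at most one. The discriminant condition $s\ge2\sqrt D$ holds since the roots are real and nonnegative.

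Because $\Phi_{p,D}$ is increasing in $s$ by \Cref{lem:Phi-monotone}, the maximum over the two signs is attained at $s=2+2\abs{\Re(\lambda\bar z)}$. Raising $\Mp$ to the power $p$ commutes with taking the maximum, which gives \eqref{eq:sym-rankone-formula}.

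For the consequence, choose $\lambda\in\T$ with $\Re(\lambda\bar z)=0$. This is always possible, and it gives $\Mp(\lambda e,v)^p=\Phi_{p,D}(2)$. The minimum of $\lambda\mapsto\Mp(\lambda e,v)$ over $\T$ is attained there, again by monotonicity in $s$, so it is intrinsic to the profile. By the strict monotonicity of $D\mapsto\Phi_{p,D}(2)$ in \Cref{lem:Phi-monotone}, this minimum value determines $D$. The maximum over $\lambda$ is $\Phi_{p,D}(2+2|z|)$. With $D$ now known, strict monotonicity in $s$ lets us read off $|z|=|\Tr(e^*v)|$.

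The only real obstacle is checking the $\wedge^2$ computation cleanly; the rest is bookkeeping.
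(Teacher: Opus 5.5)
Your proposal is correct and follows the paper's proof. You compute the trace and the product of the two nonzero eigenvalues of $(\lambda e\pm v)^*(\lambda e\pm v)$, identify $\norm{\lambda e\pm v}_p^p$ with $\Phi_{p,D}(2\pm2\Re(\lambda\overline z))$, and take the maximum using monotonicity in $s$. You then recover $D$ and $|z|$ from the minimum and maximum over $\T$. Your $\wedge^2$ (Cauchy--Binet) computation spells out the identity $t_1t_2=D$ that the paper leaves as a direct computation. Also, $\Re(\overline\lambda z)=\Re(\lambda\overline z)$ holds exactly, so no relabelling of signs is needed.
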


\begin{proof}
Both $\lambda e-v$ and $\lambda e+v$ have rank at most two.  With respect
to orthonormal bases of $\spanop\{\xi,\zeta\}$ and
$\spanop\{\eta,\theta\}$, let $t_1,t_2$ be the eigenvalues of
$(\lambda e-v)^*(\lambda e-v)$.  A direct trace and determinant computation
(cf. \cite{HornJohnson2013}) gives
\begin{equation}\label{eq:trace-det}
 t_1+t_2=2-2\Re\bigl(\lambda\overline z\bigr),
 \qquad t_1t_2=D,
\end{equation}
where $z=z(e,v)$ and $D=D(e,v)$.  Thus
\[
 \norm{\lambda e-v}_p^p
 =\Phi_{p,D}\left(2-2\Re(\lambda\overline z)\right).
\]
Replacing $v$ by $-v$ changes the minus sign to a plus sign and leaves $D$
unchanged.  Since $\Phi_{p,D}$ is strictly increasing by
\Cref{lem:Phi-monotone}, taking the larger value gives
\eqref{eq:sym-rankone-formula}.

As $\lambda$ ranges over $\T$, the minimum and maximum of the right-hand
side are
\[
 \Phi_{p,D}(2),
 \qquad \Phi_{p,D}(2+2|z|),
\]
respectively.  The second part of \Cref{lem:Phi-monotone} recovers $D$
from the first number, and the first part then recovers $|z|$ from the
second.
\end{proof}

We next record the exact behavior on each complex rank-one line.

The following scalar case follows from
the max-phase classification for smooth
strictly convex real normed spaces
\cite{LiuLiuTan2025}.

\begin{lemma}\label{lem:scalar}
	Let $\varphi:\C\to\C$ be a surjective
	map satisfying
	\[
	\max\{|\varphi(z)+\varphi(w)|,
	|\varphi(z)-\varphi(w)|\}
	=
	\max\{|z+w|,|z-w|\}
	\]
	for all $z,w\in\C$. Then
	\[
	\varphi(z)=\delta(z)Rz,
	\]
	where $R$ is a real-linear orthogonal
	map of $\C$ and
	$\delta:\C\to\{-1,1\}$ is even.
	If $\varphi(1)=1$, then, after
	absorbing a global sign,
	$Rz=z$ or $Rz=\overline z$.
\end{lemma}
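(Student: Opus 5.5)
We would derive \Cref{lem:scalar} from the max-phase classification of \cite{LiuLiuTan2025}, applied to $\C$ regarded as the real Euclidean plane $\R^2$. This space is smooth and strictly convex, and the hypothesis on $\varphi$ is exactly the max-phase identity for its norm. The cited theorem says that a surjective max-phase isometry of such a space has the form $\varphi(z)=\delta(z)Rz$. Here $R$ is a surjective real-linear isometry and $\delta$ takes values in $\{-1,1\}$.

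Two points need checking. First, for $z\ne0$ the value $\delta(z)$ is determined by $\varphi(z)=\delta(z)Rz$. Since $\varphi(-z)=\pm R(-z)=\mp Rz$ and $\varphi(-z)\in\{\pm\varphi(z)\}$, we may redefine $\delta$ on each pair $\{z,-z\}$ so that it is even. This is possible because the cited result gives $\delta(-z)=\delta(z)$ up to the choice at $0$, and the choice at $0$ is immaterial because $R0=0$. If the reference states only that $\varphi(z)=\pm Rz$ pointwise, evenness follows by defining $\delta(z)$ from $\varphi(z)$ and checking that $\varphi(-z)=\pm\varphi(z)$, which forces the sign. Second, a real-linear isometry of the Euclidean plane is orthogonal, so $R\in O(2)$.

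For the normalization, suppose $\varphi(1)=1$. Then $\delta(1)R1=1$, so $R1=\delta(1)\in\{\pm1\}$. Replace $R$ by $\delta(1)R$ and $\delta$ by $\delta(1)\delta$; this is the global sign absorption, and it keeps $\delta$ even. Now $R1=1$. Since $R$ is orthogonal, $Ri$ is a unit vector orthogonal to $1$, so $Ri=\pm i$. These two cases give $Rz=z$ and $Rz=\overline z$.

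The main obstacle is the first step: checking that the hypotheses of \cite{LiuLiuTan2025} (real, smooth, strictly convex, dimension at least two, surjectivity) hold for $\C\cong\R^2$, and that its conclusion is in the pointwise-sign form. If the reference requires something extra, one could instead argue directly in $\R^2$. In that approach, one would show that $\varphi$ preserves norms and the quantity $|\Re(z\overline w)|$, then apply a Wigner-type theorem for real inner product spaces.
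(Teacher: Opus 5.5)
Your overall route matches the paper's: you cite \cite{LiuLiuTan2025} for $\C\cong\R^2$ to get $\varphi(z)=\delta(z)Rz$, then normalize using $R1=\pm1$ and $Ri=\pm i$. The normalization step is fine. The gap is the evenness of $\delta$.

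As you note yourself, for $z\ne0$ the sign $\delta(z)$ is uniquely determined by $\varphi(z)=\delta(z)Rz$. So there is no freedom to ``redefine $\delta$ on each pair $\{z,-z\}$.'' The condition $\delta(-z)=\delta(z)$ is equivalent to $\varphi(-z)=-\varphi(z)$, a genuine property of $\varphi$ that must be proved. Your fallback, that $\varphi(-z)\in\{\pm\varphi(z)\}$ ``forces the sign,'' does not prove it. That relation is equally compatible with $\varphi(-z)=\varphi(z)$, which is exactly the odd case $\delta(-z)=-\delta(z)$. The max-phase identity alone cannot exclude this case, because the kernel is blind to pointwise signs. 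For example, the map with $\varphi(z)=z$ for $z\ne-1$ and $\varphi(-1)=1$ satisfies the identity. Yet $\varphi(-1)=\varphi(1)$, so it admits no representation $\delta(z)Rz$ with $\delta$ even. Appealing to the cited theorem for evenness is an unverified assumption about its statement, and the paper does not rely on it.

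The missing ingredient is surjectivity, which you never use in this step. Since $R$ is injective, the only points $w$ with $\varphi(w)\in\{Rz,-Rz\}$ are $w=\pm z$. Take $z\ne0$, so that $\varphi(z)\ne0$. If $\varphi(-z)=\varphi(z)$, then neither $z$ nor $-z$ maps to $-\varphi(z)\ne\varphi(z)$, and $-\varphi(z)$ lies outside the range, contradicting surjectivity. Hence $\varphi(-z)=-\varphi(z)$, that is, $\delta(-z)=\delta(z)$. The value $\delta(0)$ may be set arbitrarily because $R0=0$. This is precisely the paper's one-line argument; with it inserted in place of your redefinition step, your proof is complete.
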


\begin{proof}
	Identify $\C$ with the real Euclidean
	plane. By \cite{LiuLiuTan2025},
	$\varphi(z)=\delta(z)Rz$ for a
	real-linear orthogonal map $R$
	and a sign function $\delta$.
	
	Surjectivity forces
	$\delta(-z)=\delta(z)$; otherwise,
	the two points $z$ and $-z$ would
	have the same image, leaving its
	antipode outside the range.
	
	If $\varphi(1)=1$, a global sign
	adjustment gives $R(1)=1$.
	The only real orthogonal maps of
	$\C$ fixing $1$ are the identity
	and complex conjugation.
\end{proof}

\begin{proposition}\label{prop:rankone-rigidity}
	Let $n\ge2$ and let
	$f:\mathcal S_p^n\to\mathcal S_p^n$
	be a surjective max-phase isometry.
	There exists a standard real-linear Schatten
	isometry $T$, either complex linear or
	conjugate linear, such that
	\begin{equation}\label{eq:rankone-sign}
		f(ze)\in\{T(ze),-T(ze)\}
		\qquad(z\in\C,\ e\in\Umin(\C^n)).
	\end{equation}
\end{proposition}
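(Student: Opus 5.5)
Our plan is to reduce the statement to a Wigner-type theorem on the projective space of rank-one partial isometries, and then to use \Cref{lem:scalar} on each complex line. The steps are ordered so that the map on projective classes is fixed first, then the phases on each line, and finally their compatibility across lines.

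\emph{Step 1 (the induced map on lines).} By \Cref{lem:rankone-recognition} and \Cref{prop:norm-orth}(i), $f$ maps $\Umin(\C^n)$ onto $\Umin(\C^n)$. It also maps every nonzero multiple $ze$ to a rank-one operator of norm $|z|$. Two rank-one operators $A$ and $B$ span the same complex line exactly when $A^\perp=B^\perp$ (see the proof of \Cref{lem:rankone-recognition}). Since $f$ preserves the sets $A^\perp$ (\Cref{prop:norm-orth}(iii)), it induces a bijection $\hat f$ on the set of complex lines $\C e$. Such a line is labelled by the pair $([\xi],[\eta])$ of points of $\mathbb{P}(\C^n)\times\mathbb{P}(\C^n)$.

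\emph{Step 2 (transition probabilities).} Apply \Cref{lem:rankone-formula} to $f(\lambda e)$ and $f(v)$. Rank-one partial isometries are closed under unimodular scalars, so $\{f(\lambda e):\lambda\in\T\}$ is contained in the line $\hat f(\C e)$. We will show that it equals the full unit circle of that line. This comes from surjectivity, since the preimage of that unit circle lies in the unit circle of $\C e$. The profile $\lambda\mapsto\Mp(\lambda e,v)$ is therefore transported to the corresponding profile of the image line. Hence $\hat f$ preserves both $D(e,v)$ and $|\Tr(e^*v)|=|\ip{\xi}{\zeta}|\,|\ip{\eta}{\theta}|$.

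\emph{Step 3 (Wigner-type rigidity).} From $D$ and the product $|\ip\xi\zeta||\ip\eta\theta|$ we recover the unordered pair $\{|\ip\xi\zeta|^2,|\ip\eta\theta|^2\}$. Indeed, $D=(1-a)(1-b)$ and $ab$ together give $a+b$. Orthogonality in one factor, with the other factor fixed, is then detectable: lines with $\eta=\theta$ are those with $D=0$. This is where we expect the main obstacle. We must show that $\hat f$ preserves the product structure of $\mathbb{P}\times\mathbb{P}$, possibly with a swap of the factors, which corresponds to the transpose or adjoint cases. We would first try the following. The sets of lines sharing a fixed right vector are maximal families with pairwise $D=0$, and these are preserved. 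The two types of maximal families, fixed left vector and fixed right vector, can be distinguished only up to a global swap. Given this, $\hat f$ decomposes as $(\alpha,\beta)$ or a swap of it, where $\alpha$ and $\beta$ preserve transition probabilities. Wigner's theorem then gives unitaries or antiunitaries $U_0$ and $V_0$. We need $U_0$ and $V_0$ to be simultaneously linear or simultaneously antilinear. Mixed types would make $|\Tr(e^*v)|$ fail to be preserved on superpositions. This yields a standard isometry $T$, in one of the four forms, with $\hat f=\widehat T$.

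\emph{Step 4 (phases).} Replace $f$ by $T^{-1}\circ f$, so that each line $\C e$ is mapped onto itself. The restriction $z\mapsto ze\mapsto\varphi_e(z)e$ is a surjective max-phase isometry of $\C$, because $\Mp(ze,we)=\max\{|z+w|,|z-w|\}$. By \Cref{lem:scalar}, $\varphi_e(z)=\delta_e(z)R_e z$. It then remains to show that $R_e$ is the identity for every $e$, after adjusting $T$ by a global scalar or a conjugation. For that we use the phase-sensitive formula \eqref{eq:sym-rankone-formula} with $\lambda$ varying. The quantity $|\Re(\lambda\overline{z(e,v)})|$ must be preserved for all $\lambda$. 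Choose $v$ with $z(e,v)\neq0$ and $D(e,v)$ fixed. Comparing the rotations $R_e$ and $R_v$ then forces $R_e=\pm R_v$ as rotations, or $R_e=\pm R_v$ as reflections, consistently with the linearity type of $T$. Connectedness of $\Umin$ through such pairs makes this choice global. Absorbing the constant into $T$ gives $f(ze)=\pm T(ze)$.
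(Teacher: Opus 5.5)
Your Steps 1--2 are fine. The maximal-clique argument in Step 3 does give $\hat f=(\alpha,\beta)$ up to a global swap, with $\alpha,\beta$ preserving transition probabilities. (Minor slip: $D=0$ means a shared left \emph{or} right vector, not $\eta=\theta$.)

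\textbf{Gap 1: excluding mixed types in Step 3.} The reason you give is false. On lines, $D(e,v)$ and $|\Tr(e^*v)|=|\ip{\xi}{\zeta}|\,|\ip{\eta}{\theta}|$ depend only on the two factor transition probabilities separately. So a pair $(U_0,V_0)$ with $U_0$ unitary and $V_0$ antiunitary preserves both quantities; the set of lines contains no superpositions that could detect the relative type. Such a pair is induced by none of the four standard forms. Indeed, $\xi\eta^*\mapsto U_0\xi(V_0\eta)^*$ picks up a factor $\lambda^2$ under $(\xi,\eta)\mapsto(\lambda\xi,\lambda\eta)$, so it is not even well defined on $\Umin$. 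Step 4 then has no $T$ to compose with.

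Projective data cannot close this. You must use the phase-sensitive content of \eqref{eq:sym-rankone-formula} \emph{before} fixing $T$:
\begin{itemize}
\item Write $f(\lambda e)=\pm\lambda^{\sigma(e)}f(e)$ from \Cref{lem:scalar}, with $z=\Tr(e^*v)$ and $w=\Tr(f(e)^*f(v))$. Then $|\Re(\overline\lambda\mu z)|=|\Re(\overline{\lambda^{\sigma(e)}}\mu^{\sigma(v)}w)|$ for all $\lambda,\mu\in\T$.
\item Taking $\lambda=\mu$ makes $\sigma$ constant, and then $w\in\{\pm z\}$ or $w\in\{\pm\overline z\}$.
\item For a mixed pair, write $f(\xi\eta^*)=c\,U_0\xi(V_0\eta)^*$ with $|c|=1$. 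Then $w=\pm z$ forces $\overline{c(e)}c(v)=\pm\bigl(\ip{\eta}{\theta}/|\ip{\eta}{\theta}|\bigr)^2$ for operators sharing the left vector $\xi$. The case $w=\pm\overline z$ gives the mirror relation for a shared right vector.
\item Multiplying around $\xi\eta_1^*,\xi\eta_2^*,\xi\eta_3^*$ would force the squared phase of $\ip{\eta_1}{\eta_2}\ip{\eta_2}{\eta_3}\ip{\eta_3}{\eta_1}$ to equal $\pm1$. This fails for $\eta_1=(1,0)$, $\eta_2=(1,1)/\sqrt2$, $\eta_3=(1,i)/\sqrt2$, where the phase is $e^{\pm i\pi/4}$.
\end{itemize}
The swapped mixed case is identical.

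\textbf{Gap 2: the reflection case in Step 4.} After composing with $T^{-1}$, every line is fixed. The only standard isometries fixing every line are $A\mapsto\omega A$ with $\omega\in\T$. So nothing can be absorbed by ``a conjugation,'' and the case where the $R_e$ are reflections must be \emph{excluded}. The same device does this: there $f(ze)=\pm c_e\overline z\,e$, and the phase identity gives $c_e\overline{c_v}=\pm z/\overline z$, which fails around such a triangle.

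In the rotation case your argument does close. You have $f(ze)=\pm c_e ze$, the identity gives $c_v=\pm c_e$ whenever $\Tr(e^*v)\neq0$, and connectivity of that graph gives $c_e\in\{\pm c\}$, which you absorb into $T$.

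\textbf{Comparison with the paper.} Once repaired this way, your route genuinely differs from the paper's. The paper keeps the pair-dependent signs in $w=\pm z$ (or $\pm\overline z$). It removes them by lifting the induced isometry of $\Umin(\C^n)/\{\pm1\}$ through the double cover, using simple connectivity of $\Umin(\C^n)$. It then quotes Moln\'ar's theorem on maps preserving $\Tr(e^*v)$. Your factorwise Wigner step leaves only a one-point phase $c_e$, whose sign ambiguity is removed by plain graph connectivity. This avoids both the covering argument and Moln\'ar's theorem, at the price of the type-compatibility argument above.
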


\begin{proof}
	Put $\mathcal U=\Umin(\C^n)$.
	By \Cref{lem:rankone-recognition,prop:norm-orth},
	$f$ maps rank-one operators onto rank-one
	operators and preserves their orthogonality
	complements. Since two nonzero rank-one
	operators have the same orthogonality
	complement precisely when they span the
	same complex line, $f$ maps each such
	line onto another. Thus \Cref{lem:scalar}
	gives
	\begin{equation}\label{eq:line-orientation}
		f(\lambda e)
		=\delta_e(\lambda)
		\lambda^{\sigma(e)}f(e),
		\qquad \lambda\in\T,
	\end{equation}
	where $\delta_e(\lambda)\in\{-1,1\}$
	and $\sigma(e)\in\{1,-1\}$.
	
	By \Cref{lem:rankone-formula} and
	surjectivity on each line, $f$ preserves
	$D(e,v)$ and
	\begin{equation}\label{eq:absolute-transition}
		|\Tr(f(e)^*f(v))|
		=|\Tr(e^*v)|,\qquad e,v\in\mathcal U.
	\end{equation}
	Writing $z=\Tr(e^*v)$ and
	$w=\Tr(f(e)^*f(v))$, the same formula
	yields
	\begin{equation}\label{eq:orientation-identity}
		|\Re(\overline\lambda\mu z)|
		=
		\left|\Re\left(
		\overline{\lambda^{\sigma(e)}}
		\mu^{\sigma(v)}w\right)\right|
	\end{equation}
	for all $\lambda,\mu\in\T$.
	
	If $z\ne0$ and $\sigma(e)\ne\sigma(v)$,
	setting $\lambda=\mu$ makes the left-hand
	side constant while the right-hand side
	varies with $\mu$, a contradiction.
	Since the graph joining pairs with
	nonzero trace is connected, $\sigma(e)$
	has a common value $\sigma$.
	Varying $\lambda$ then gives
	\[
	w\in
	\begin{cases}
		\{z,-z\},&\sigma=1,\\
		\{\overline z,-\overline z\},&\sigma=-1.
	\end{cases}
	\]
	
	It remains to remove these pair-dependent
	signs. Equip
	$Q=\mathcal U/\{\pm1\}$ with
	\[
	d_Q([e],[v])^2
	=2-2|\Re\Tr(e^*v)|.
	\]
	The preceding identity shows that
	$[e]\mapsto[f(e)]$ is a surjective
	isometry of $Q$, hence a homeomorphism.
	
	The manifold $\mathcal U$ is simply
	connected for $n\ge2$, as follows
	from its circle-bundle representation
	\[
	\mathcal U\cong
	(S^{2n-1}\times S^{2n-1})/\T.
	\]
	Consequently, the homeomorphism of
	$Q$ lifts through the double covering
	$\mathcal U\to Q$ to a homeomorphism
	$g:\mathcal U\to\mathcal U$ satisfying
	$g(e)=\pm f(e)$.
	
	On the set
	\[
	\Omega=\{(e,v)\in\mathcal U^2:
	\Tr(e^*v)\ne0\},
	\]
	the continuous function
	\[
	(e,v)\longmapsto
	\begin{cases}
		\displaystyle
		\frac{\Tr(g(e)^*g(v))}{\Tr(e^*v)},
		&\sigma=1,\\[6pt]
		\displaystyle
		\frac{\Tr(g(e)^*g(v))}
		{\overline{\Tr(e^*v)}},
		&\sigma=-1
	\end{cases}
	\]
	takes values in $\{-1,1\}$.
	The set $\Omega$ is path connected:
	for fixed $e$, the condition
	$\Tr(e^*v)\ne0$ defines a path-connected
	subset containing $e$, and the diagonal
	of $\mathcal U^2$ is connected.
	Since the displayed function equals
	$1$ on the diagonal, it is identically
	$1$. Together with
	\eqref{eq:absolute-transition}, this
	proves
	\[
	\Tr(g(e)^*g(v))
	=
	\begin{cases}
		\Tr(e^*v),&\sigma=1,\\
		\overline{\Tr(e^*v)},&\sigma=-1
	\end{cases}
	\]
	for all $e,v\in\mathcal U$.
	
	We may therefore apply
	\cite[Theorem 2]{Molnar2002},
	composing with entrywise conjugation
	when $\sigma=-1$.
	It follows that
	$g(e)=T(e)$ for a standard
	real-linear Schatten isometry $T$
	of the asserted orientation.
	Since $f(e)=\pm g(e)$, the scalar
	classification in \Cref{lem:scalar}
	extends this relation to every
	complex multiple of $e$, proving
	\eqref{eq:rankone-sign}.
\end{proof}

We next establish a max-profile uniqueness principle, which provides
the key passage from rank-one information to arbitrary matrices.  It is
precisely here that finite dimensionality removes the
singularity of the negative power $|A|^{p-1}$: after the supports are
identified, $A$ is invertible in its support corner.

\begin{lemma}
\label{lem:functional-phase-retrieval}
Let $E,F$ be complex Hilbert spaces of the same finite dimension, and let
$C,D:E\to F$ be invertible.  If
\begin{equation}\label{eq:functional-phase}
 |\Re\Tr(C^*e)|=|\Re\Tr(D^*e)|
\end{equation}
for every rank-one operator $e:E\to F$, then $D=C$ or $D=-C$.
\end{lemma}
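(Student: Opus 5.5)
The plan is to linearize the hypothesis \eqref{eq:functional-phase} by a sign-connectedness argument. Consider the real-linear functionals on the real vector space $L(E,F)$ given by
\[
 \alpha(X)=\Re\Tr(C^*X),\qquad \beta(X)=\Re\Tr(D^*X).
\]
The rank-one operators $e$ form a real cone $\mathcal R$, and on it $|\alpha|=|\beta|$. The goal is to show that $\beta=\alpha$ on $\mathcal R$, or $\beta=-\alpha$ on $\mathcal R$. Since rank-one operators span $L(E,F)$ over $\R$, this gives $\beta=\pm\alpha$ on all of $L(E,F)$. Because the real pairing $\Re\Tr(Y^*X)$ is nondegenerate, we then get $D=\pm C$.

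First step: we use the fact that the zero sets of $\alpha$ and $\beta$ restricted to $\mathcal R$ coincide, and then show that on the complement the sign $\beta/\alpha\in\{\pm1\}$ is locally constant. For a fixed pair of unit vectors $\eta\in E$, $\xi\in F$, set $e=\xi\otimes\eta$, so that $\Tr(C^*e)=\ip{C\eta}{\xi}$ up to the paper's convention. Here the invertibility of $C$ enters: $C\eta\ne0$, so the complex number $c=\ip{C\eta}{\xi}$ can be made nonzero. Then $\lambda\mapsto\Re(\lambda c)$ and $\lambda\mapsto\Re(\lambda d)$, with $d=\ip{D\eta}{\xi}$, satisfy $|\Re(\lambda c)|=|\Re(\lambda d)|$ for all $\lambda\in\C$. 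Squaring and comparing coefficients of $\lambda^2$, $|\lambda|^2$ and $\bar\lambda^2$ gives $d=\pm c$. Thus, on each complex rank-one line with $c\ne0$, there is a single sign $s(\xi,\eta)$ with $\ip{D\eta}{\xi}=s\,\ip{C\eta}{\xi}$.

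Second step: we globalize the sign. Fix $\eta\neq0$. The map $\xi\mapsto\ip{D\eta}{\xi}\mp\ip{C\eta}{\xi}$ is conjugate-linear in $\xi$. The set of $\xi$ where $\ip{C\eta}{\xi}\ne0$ is the complement of a hyperplane, so it is open, dense and connected. On it the continuous ratio is $\pm1$-valued, hence constant. By density we get $D\eta=s(\eta)C\eta$ for every $\eta$. Since $C$ is invertible, the operator $C^{-1}D$ has every vector as an eigenvector with eigenvalue $\pm1$. Linearity then forces the eigenvalue to be constant: if $C^{-1}Dx=x$ and $C^{-1}Dy=-y$, then $x+y$ would have to be an eigenvector with eigenvalue $\pm1$, which is impossible unless $x$ or $y$ is zero. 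Therefore $C^{-1}D=\pm I$, that is, $D=\pm C$.

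The main obstacle is the first step, namely deducing $d=\pm c$ from $|\Re(\lambda c)|=|\Re(\lambda d)|$. Writing $\lambda=e^{i\theta}$, we have $\Re(\lambda c)^2=\tfrac12|c|^2+\tfrac12\Re(\lambda^2c^2)$. Hence the hypothesis gives $|c|=|d|$ and $c^2=d^2$, which is exactly $d=\pm c$. Invertibility is used to ensure $C\eta\ne0$ for every $\eta\neq0$, so the connected-complement argument applies for each $\eta$. Invertibility of $D$ is not even needed once $C$ is invertible, although it gives the symmetric statement for free.
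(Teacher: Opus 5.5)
Your proposal is correct and follows essentially the paper's route: for fixed $\eta$ you obtain $D\eta=\tau(\eta)C\eta$, and you then force $\tau$ to be constant by additivity, where your eigenvector argument for $C^{-1}D$ is the paper's $\eta_1+\eta_2$ argument in different words. The only difference is that you prove by hand the step ``two real-linear functionals of $\xi$ with equal absolute values are equal or opposite'' (first comparing $\lambda^2$-coefficients on complex lines, then using connectedness of the complement of a complex hyperplane), whereas the paper simply cites this elementary fact.
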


\begin{proof}
Fix $\eta\in E$.  Operators of the form $\xi\otimes\eta$, with $\xi\in F$,
form a real linear space consisting of rank at most one.  On this space the
two sides of \eqref{eq:functional-phase} are the absolute values of two real
linear functionals.  Two real linear functionals with identical absolute
values are equal or opposite; hence
\[
 D\eta=\tau(\eta)C\eta,
 \qquad \tau(\eta)\in\{-1,1\}.
\]
If $\eta_1,\eta_2$ are linearly independent, invertibility of $C$ and the
identity for $\eta_1+\eta_2$ force
$\tau(\eta_1)=\tau(\eta_2)=\tau(\eta_1+\eta_2)$.  Collinear vectors are
handled directly.  Thus $\tau$ is constant and $D=\pm C$.
\end{proof}

\begin{theorem}\label{thm:max-profile}
Let $0<p<1$ and $A,B\in M_n(\C)$.  Suppose that
\begin{equation}\label{eq:profile-hyp}
 \Mp(A,te)=\Mp(B,te)
 \qquad(t\in\R,\ e\in\Umin(\C^n)).
\end{equation}
Then $B=A$ or $B=-A$.
\end{theorem}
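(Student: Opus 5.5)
The plan is to extract from the profile $t\mapsto\Mp(A,te)$ enough information to reach the hypothesis of \Cref{lem:functional-phase-retrieval}. Norms come first. Letting $t\to0$ gives $\norm A_p=\norm B_p$.

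Next I recover the supports. For a rank-one partial isometry $e$ with $e\orth A$, we have $\norm{A\pm te}_p^p=\norm A_p^p+|t|^p$ for all $t$. If $e$ is not support-orthogonal to $A$, \Cref{lem:p-additivity} gives strict inequality for at least one sign, and I expect strict inequality for both. Hence the profile equals $(\norm A_p^p+|t|^p)^{1/p}$ identically exactly when $e\orth A$. Since this condition is read off from the profile, $A^\perp\cap\Umin=B^\perp\cap\Umin$. Testing with $e=\xi\otimes\eta$ then gives the same left and right support projections for $A$ and $B$, as in \Cref{lem:rankone-recognition}. If $A=0$ then $B=0$ and we are done. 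Otherwise let $P,Q$ be the common left and right supports, with $E=QH$ and $F=PH$. Then $A,B$ restrict to invertible maps $E\to F$; this is where finite dimensionality is used.

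Now take $e$ inside the corner, so $e=PeQ$. For small $t$, $A+te$ stays invertible on the corner and vanishes off it. The map $X\mapsto\Tr\bigl((X^*X)^{p/2}\bigr)$ is smooth near the invertible $A$, with derivative
\[
 \frac{d}{dt}\Big|_{t=0}\norm{A+te}_p^p=p\,\Re\Tr\bigl(|A|^{p-2}A^*e\bigr).
\]
So for $t\to0^+$,
\[
 \Mp(A,te)^p=\norm A_p^p+p\,t\,\bigl|\Re\Tr(C^*e)\bigr|+o(t),
 \qquad C:=A|A|^{p-2}.
\]
The maximum over the two signs turns the derivative into its absolute value. The same expansion holds for $B$ with $D:=B|B|^{p-2}$. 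Comparing the first-order terms of the equal profiles gives $|\Re\Tr(C^*e)|=|\Re\Tr(D^*e)|$ for every rank-one $e:E\to F$. By homogeneity this extends from partial isometries to all rank-one maps. Both $C$ and $D$ are invertible on the corner, so \Cref{lem:functional-phase-retrieval} yields $D=\pm C$.

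Finally I invert the map $A\mapsto A|A|^{p-2}$. With polar decomposition $A=U|A|$, where $U$ is unitary from $E$ to $F$, we get $C=U|A|^{p-1}$. This is again a polar decomposition, so $U$ and $|A|^{p-1}$ are determined by $C$. Since $p-1\neq0$, $|A|$ is determined as well. Therefore $D=\pm C$ gives $B=\pm A$.

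The main obstacle is the support-recovery step, specifically the claim that the profile is identically $(\norm A_p^p+|t|^p)^{1/p}$ only when $e\orth A$. Equality for one sign is excluded by \Cref{lem:p-additivity} applied to that sign, since strict inequality for the other sign does not help when the maximum is what is observed. If the two-sided claim is delicate, I would instead characterize $e\orth A$ by the small-$t$ behaviour. When $e$ is not support-orthogonal to $A$, the kernel-direction contribution of $te$ is reduced to $o(|t|^p)$, or vanishes, on at least one side, so $\Mp(A,te)^p-\norm A_p^p$ is $O(t^p)$ with a constant strictly below $1$. When $e\orth A$, this constant is exactly $1$. I would verify this through the $2\times2$ block structure of $e$ relative to $P$ and $Q$, using perturbation of singular values.
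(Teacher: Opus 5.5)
Your proposal is correct and follows essentially the paper's own proof: equal norms from $t=0$, then recovery of the common support corner from the rank-one orthogonality locus, then the first-order expansion $\Mp(A,te)^p=\norm A_p^p+p|t|\,|\Re\Tr(J_p(A)^*e)|+o(|t|)$ with $J_p(A)=A|A|^{p-2}=u_A|A|^{p-1}$, then \Cref{lem:functional-phase-retrieval}, and finally inversion of the polar data. The step you flag as the main obstacle is immediate: \Cref{lem:p-additivity} applies to each sign separately and $A\orth e$ is invariant under $e\mapsto-e$, so both inequalities are strict for every non-orthogonal $e$, and the single value $t=1$ already suffices, exactly as in \eqref{eq:orth-max-character}; your fallback small-$t$ argument is not needed.
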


\begin{proof}
Taking $t=0$ gives $\norm A_p=\norm B_p$.  If $A=0$, then $B=0$ and there
is nothing to prove; hence assume that both matrices are nonzero.  By
\eqref{eq:orth-max-character}, for a rank-one $e$,
\[
 A\orth e
 \quad\Longleftrightarrow\quad
 \Mp(A,e)^p=\norm A_p^p+1
 \quad\Longleftrightarrow\quad B\orth e.
\]
Thus $A$ and $B$ have the same rank-one orthogonality locus.  If their rank
is less than $n$, this locus is the Segre product
\[
 \mathbb P(\suppL(A)^\perp H)
 \times \mathbb P(\suppR(A)^\perp H),
\]
and equality of the two products gives
\begin{equation}\label{eq:same-supports}
 \suppL(A)=\suppL(B),\qquad \suppR(A)=\suppR(B).
\end{equation}
If the locus is empty, both matrices have full rank, and
\eqref{eq:same-supports} also holds.  Denote the common support spaces by
$F$ and $E$.  Restricted from $E$ to $F$, both $A$ and $B$ are invertible.

Write $A=u_A|A|$ and set
\[
 J_p(A)=u_A|A|^{p-1}:E\to F;
\]
define $J_p(B)$ similarly.  For a rank-one $e:E\to F$, finite-dimensional
trace differentiation gives
\begin{equation}\label{eq:directional-derivative}
 \left.\frac{d}{dt}\right|_{t=0}\norm{A+te}_p^p
 =p\Re\Tr(J_p(A)^*e).
\end{equation}
Indeed, this is the standard derivative of
$X\mapsto\Tr|X|^p$ at an invertible matrix; it follows either from singular
value perturbation or from the divided-difference formula for trace
functions.  Consequently,
\begin{equation}\label{eq:max-expansion}
 \Mp(A,te)^p
 =\norm A_p^p
  +p|t|\,|\Re\Tr(J_p(A)^*e)|+o(|t|).
\end{equation}
The identical expansion holds for $B$.  Comparing it with
\eqref{eq:profile-hyp} yields
\[
 |\Re\Tr(J_p(A)^*e)|
 =|\Re\Tr(J_p(B)^*e)|
\]
for every rank-one $e:E\to F$.  By
\Cref{lem:functional-phase-retrieval},
$J_p(B)=\pm J_p(A)$.

Finally, $J_p$ is injective up to the same real sign on the invertible
corner.  In fact, the polar part of $J_p(A)$ is $u_A$ and
$|J_p(A)|=|A|^{p-1}$, whence
\[
 A=u_A|J_p(A)|^{1/(p-1)}.
\]
It follows that $B=A$ in the plus case and $B=-A$ in the minus case.
\end{proof}

\begin{remark}\label{rem:why-profile}
One value of the max-kernel is not sufficient to recognize a ray below one.
For example, noncollinear unit vectors in $\ell_p^2$ can satisfy
$\max\{\norm{x+y}_p,\norm{x-y}_p\}=2$.  The strength of
\Cref{thm:max-profile} is the use of the entire one-parameter rank-one
profile, not a single equality case.
\end{remark}

\section{Classification theorems}
\label{sec:classification}

We can now prove the complete finite-dimensional result.

\begin{theorem}\label{thm:matrix-classification}
Let $0<p<1$ and $n\ge2$.  A map
$f:\mathcal S_p^n\to\mathcal S_p^n$ is a surjective max-phase isometry if
and only if there are a surjective real-linear Schatten isometry $T$ and an
even function $\eps:\mathcal S_p^n\to\{-1,1\}$ such that
\begin{equation}\label{eq:abstract-classification}
 f(A)=\eps(A)T(A)\qquad(A\in\mathcal S_p^n).
\end{equation}
\end{theorem}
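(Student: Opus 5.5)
The plan has two directions: sufficiency is a short check, and necessity reduces to \Cref{prop:rankone-rigidity} and \Cref{thm:max-profile}.

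\emph{Sufficiency.} Let $T$ be a surjective real-linear Schatten isometry and $\eps$ even and sign-valued. Then $T(A)\pm T(B)=T(A\pm B)$, and multiplying either argument by $-1$ only permutes the pair $\{\norm{\cdot+\cdot}_p,\norm{\cdot-\cdot}_p\}$. Hence
\[
\Mp(\eps(A)T(A),\eps(B)T(B))=\Mp(T(A),T(B))=\Mp(A,B).
\]
For surjectivity, given $W$, put $A=T^{-1}(W)$. Then $f(A)=\pm W$. If $f(A)=-W$, use instead $f(-A)=\eps(-A)T(-A)=-\eps(A)T(A)=W$, where evenness of $\eps$ is used.

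\emph{Necessity.} Let $f$ be a surjective max-phase isometry. By \Cref{prop:rankone-rigidity} there is a standard real-linear Schatten isometry $T$, complex or conjugate linear, with $f(ze)=\pm T(ze)$ for all $z\in\C$ and $e\in\Umin(\C^n)$. Set $g=T^{-1}\circ f$. Since $T^{-1}$ is real linear and isometric, $g$ is again a surjective max-phase isometry, and it satisfies $g(te)=\pm te$ for all $t\in\R$ and $e\in\Umin$. The sign may depend on $(t,e)$, but this does not matter because $\Mp$ is insensitive to the sign of either argument.

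Now fix $A$. For all $t\in\R$ and $e\in\Umin$,
\[
\Mp(g(A),te)=\Mp(g(A),\pm te)=\Mp(g(A),g(te))=\Mp(A,te).
\]
This is exactly hypothesis \eqref{eq:profile-hyp} with $B=g(A)$. \Cref{thm:max-profile} then gives $g(A)=\eps(A)A$ with $\eps(A)\in\{-1,1\}$. Here $\eps(0)$ is arbitrary, and $\eps(A)$ is uniquely determined for $A\ne0$. Therefore $f(A)=T(g(A))=\eps(A)T(A)$.

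It remains to show that $\eps$ is even. Suppose $\eps(-A)=-\eps(A)$ for some $A\neq0$. Then $f(-A)=-\eps(A)T(-A)=\eps(A)T(A)=f(A)$. Consider $W=-\eps(A)T(A)$, and let $C$ be any preimage of $W$. Since $f(C)=\eps(C)T(C)$ and $T$ is injective, $C=\pm A$. But $f(\pm A)=\eps(A)T(A)\neq W$, because $T(A)\neq0$. So $W$ is not in the range of $f$, contradicting surjectivity. This is the same argument as in \Cref{lem:scalar}.

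\emph{Main obstacle.} The substantive difficulty lies entirely in the inputs: the lift and sign-removal in \Cref{prop:rankone-rigidity}, and the derivative expansion in \Cref{thm:max-profile}. Within the present argument, the point requiring care is that \Cref{thm:max-profile} needs the profile against real multiples $te$ for every $t\in\R$, while \Cref{prop:rankone-rigidity} controls $f(te)$ only up to a sign depending on $(t,e)$. This gap closes because the max-phase kernel is invariant under sign changes of either argument.

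Finally, to match \Cref{thm:intro-main}, one notes that the standard real-linear isometries produced are exactly $A\mapsto UAV$, $UA^{\mathsf t}V$, $U\overline AV$ and $UA^*V$.
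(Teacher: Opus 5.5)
Your proposal is correct and follows the paper's proof essentially step for step. You apply \Cref{prop:rankone-rigidity} to obtain $T$, match the real rank-one max-profiles of $A$ and $T^{-1}f(A)$ using the sign-insensitivity of $\Mp$ so that \Cref{thm:max-profile} applies, deduce evenness of $\eps$ from surjectivity, and verify the converse directly, with your evenness step spelled out a bit more explicitly than the paper's.
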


\begin{proof}
Assume first that $f$ is a surjective max-phase isometry.  By
\Cref{prop:rankone-rigidity}, choose a real-linear Schatten isometry $T$ for
which $f(te)=\pm T(te)$ on every rank-one line.  Fix $A$ and put
$B=T^{-1}f(A)$.  For $e\in\Umin$ and $t\in\R$, the sign attached to $te$
only interchanges the two terms in a maximum.  Hence
\begin{align*}
 \Mp(B,te)
 &=\Mp(T(B),T(te))\\
 &=\Mp(f(A),f(te))
  =\Mp(A,te).
\end{align*}
By \Cref{thm:max-profile}, $B=\pm A$.  Thus
$f(A)=\eps(A)T(A)$ for some sign $\eps(A)$.

It remains to use surjectivity.  For $A\ne0$, the only possible preimages of
$T(A)$ under the displayed formula are $A$ and $-A$.  The two values
$f(A)$ and $f(-A)$ cover both $T(A)$ and $-T(A)$ exactly when
\[
 \eps(-A)=\eps(A).
\]
Since $f$ is onto, $\eps$ is even.

Conversely, let \eqref{eq:abstract-classification} hold with $\eps$ even.
For $A,B$ set $\alpha=\eps(A)\eps(B)\in\{-1,1\}$.  Then
\begin{align*}
 \Mp(f(A),f(B))
 &=\max\{\norm{T(A)+\alpha T(B)}_p,
          \norm{T(A)-\alpha T(B)}_p\}\\
 &=\max\{\norm{A+\alpha B}_p,\norm{A-\alpha B}_p\}
  =\Mp(A,B).
\end{align*}
To see surjectivity, let $C=T(A)$.  If $\eps(A)=1$, then $f(A)=C$; if
$\eps(A)=-1$, evenness gives $f(-A)=C$.
\end{proof}

The real-linear isometry in \Cref{thm:matrix-classification} has a concrete
operator form.  The classification of Schatten isometries goes back to
Arazy \cite{Arazy1975}; the quasi-Banach range is included in the formulation
of Erdos \cite{Erdos1994}.  It also follows from the recent unified solution
of Tingley's problem for $0<p\ne2<\infty$ \cite{HuangZhu2026}.

\begin{theorem}\label{thm:concrete}
Under the hypotheses of \Cref{thm:matrix-classification}, there exist
unitary matrices $U,V$ such that the isometry $T$ in
\eqref{eq:abstract-classification} is exactly one of
\begin{equation}\label{eq:fourforms}
 T(A)=UAV,\qquad UA^{\mathsf t}V,\qquad
 U\overline A V,\qquad UA^*V.
\end{equation}
Conversely, every map $f(A)=\eps(A)T(A)$ with $T$ as in
\eqref{eq:fourforms} and $\eps$ even is a surjective max-phase isometry.
\end{theorem}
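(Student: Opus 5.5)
The plan is to identify the abstract isometry $T$ from \Cref{thm:matrix-classification} concretely. The cleanest route uses the rank-one data already obtained in \Cref{prop:rankone-rigidity}, rather than an external classification of quasi-Banach Schatten isometries. There, $T$ was produced from a map $g$ on $\Umin(\C^n)$ satisfying either
\[
 \Tr(g(e)^*g(v))=\Tr(e^*v)
 \quad\text{or}\quad
 \Tr(g(e)^*g(v))=\overline{\Tr(e^*v)},
\]
by an application of \cite[Theorem 2]{Molnar2002}. That theorem already states its conclusion in concrete form. A transition-preserving map on rank-one partial isometries is induced either by $e\mapsto UeV$ or by $e\mapsto Ue^{\mathsf t}V$, with $U,V$ unitary. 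The second form requires the square case, which is automatic here since both spaces are $\C^n$. Composing with entrywise conjugation in the case $\sigma=-1$ yields $U\overline eV$ or $Ue^*V$. So the first step is to record that $T$ agrees on $\Umin$ with one of the four maps in \eqref{eq:fourforms}.

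Second, I would check that $T$ is determined by its values on $\Umin$. In \Cref{thm:matrix-classification}, $T$ is real-linear, and real-linear combinations of rank-one partial isometries span $M_n(\C)$, for example via the singular value decomposition. Hence two real-linear maps that agree on $\Umin$ agree everywhere. Each of the four candidate maps in \eqref{eq:fourforms} is real-linear, so $T$ coincides with that map on all of $\mathcal S_p^n$.

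Third, the converse reduces to routine checks. Each map in \eqref{eq:fourforms} preserves singular values: unitary multiplication, transposition, conjugation and adjoint all leave $|A|$ unitarily equivalent. Each is also surjective and real-linear. Therefore each is a surjective real-linear Schatten isometry, and the converse part of \Cref{thm:matrix-classification} shows that $\eps(A)T(A)$, with $\eps$ even, is a surjective max-phase isometry.

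The main obstacle is a consistency issue. In \Cref{thm:matrix-classification}, $T$ was only guaranteed to be ``a real-linear Schatten isometry'' satisfying $f(te)=\pm T(te)$. One must make sure that the $T$ being classified is the one coming from Molnár's theorem, or else justify why any admissible $T$ has this form. If the abstract $T$ in \eqref{eq:abstract-classification} were arbitrary, I would instead argue directly, in two steps:
\begin{enumerate}
 \item A surjective real-linear isometry of $\mathcal S_p^n$ preserves support orthogonality by \Cref{lem:p-additivity} and rank one by \Cref{lem:rankone-recognition}. It therefore maps $\Umin$ onto $\Umin$.
 \item By \Cref{lem:rankone-formula} and real-linearity, $T$ preserves $\Re\Tr(e^*v)$ up to the same orientation analysis used in \Cref{prop:rankone-rigidity}. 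This places us again under \cite[Theorem 2]{Molnar2002}.
\end{enumerate}
The only remaining delicacy is the sign freedom: $-T$ also has the same form, because the sign can be absorbed into $U$.
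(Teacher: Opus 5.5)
Your proposal is correct, but the forward direction follows a different route from the paper. The paper does not look back at how $T$ was constructed. It asserts that every surjective real-linear isometry of $\mathcal S_p^n$ ($p\ne2$) is complex linear or conjugate linear, then imports the Arazy--Erdos classification of complex-linear Schatten isometries, composing with entrywise conjugation in the other case. That route covers \emph{any} $T$ satisfying \eqref{eq:abstract-classification}, so your consistency worry never arises. Its cost is that it rests on an external theorem in the quasi-Banach range and on a dichotomy the paper states without proof or citation.

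Your main route instead notes that the concrete form is already produced by Moln\'ar's theorem inside \Cref{prop:rankone-rigidity}, and propagates it from $\Umin$ by real-linearity. Your fallback reclassifies an arbitrary surjective real-linear isometry with the paper's own rank-one tools, in effect proving the dichotomy rather than assuming it.

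Two small points would tighten it.
\begin{enumerate}
\item The consistency issue has a one-line resolution. If $\eps T=\eps' T'$, then $L=T^{-1}T'$ is real-linear with $L(A)\in\{A,-A\}$ for every $A$. This forces $L=\pm\mathrm{id}$, so $T'=\pm T$, and the sign is absorbed into $U$.
\item The fallback is easier than the analysis in \Cref{prop:rankone-rigidity}, because a real-linear isometry preserves $\norm{e-v}_p$ itself, not only the maximum. Once $D(e,v)$ is read off from the profile, strict monotonicity of $\Phi_{p,D}$ gives $\Re\Tr(T(e)^*T(v))=\Re\Tr(e^*v)$ exactly. Combine this with the fact that $T$ acts on each line $\C e$ as the identity or conjugation relative to $T(e)$, with orientation constant as in \Cref{prop:rankone-rigidity}. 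This yields $\Tr(T(e)^*T(v))=\Tr(e^*v)$ or its conjugate, with no covering-space sign removal.
\end{enumerate}
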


\begin{proof}
A surjective real-linear isometry of $\mathcal S_p^n$, $p\ne2$, is either
complex linear or conjugate linear.  In the complex-linear case, the
Arazy--Erdos theorem gives $UAV$ or $UA^{\mathsf t}V$.  Composing a
conjugate-linear isometry with entrywise conjugation reduces it to the
complex-linear case and gives the other two forms.  The converse is contained
in \Cref{thm:matrix-classification} and is also immediate from unitary
invariance of the Schatten quasi-norm.
\end{proof}

The one-dimensional case is consistent with the theorem but is best stated
separately.

\begin{remark}\label{rem:scalar-case}
When $n=1$, the conclusion is \Cref{lem:scalar}: a surjective max-phase map
on $\C$, viewed as a real plane, is an even pointwise sign times a real
orthogonal map.  Rotations and reflections are precisely the scalar
instances of the complex-linear and conjugate-linear forms in
\eqref{eq:fourforms}.
\end{remark}

We next record what survives without finite dimensionality.  Let
$\Ffin(H)$ denote the finite-rank operators on $H$.

\begin{theorem}
\label{thm:finite-rank-core}
Let $H$ be a complex Hilbert space with $\dim H\ge2$, let $0<p<1$, and let
$f:\Sp(H)\to\Sp(H)$ be a surjective max-phase isometry.  Then there exists
a standard surjective real-linear Schatten isometry $T$ such that
\begin{equation}\label{eq:finite-rank-core}
 f(A)\in\{T(A),-T(A)\}
 \qquad(A\in\Ffin(H)).
\end{equation}
Moreover, $f$ preserves every finite rank, both singular support dimensions,
and two-sided support orthogonality on all of $\Sp(H)$.
\end{theorem}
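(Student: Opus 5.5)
The plan is to reduce everything to finite-dimensional corners and reuse the matrix machinery. First, the "moreover" part follows directly from Section~\ref{sec:quasi-geometry}. Two-sided support orthogonality is preserved in both directions by \Cref{prop:norm-orth}(ii), and finite rank by \Cref{lem:finite-rank-recognition}. For the singular support dimensions, note that $\rank\suppL(A)$ and $\rank\suppR(A)$ are both equal to the supremum of cardinalities of orthogonal families of rank-one partial isometries $e_j$ with $A^\perp\subseteq e_j^\perp$. This quantity is expressed through the order on orthogonality complements, so \Cref{prop:norm-orth}(iii) transports it and the infinite-cardinal case is covered as well. Rank-one operators are preserved in both directions by \Cref{lem:rankone-recognition}, and complex lines are mapped onto complex lines.

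Next, I build $T$ on rank-one operators by repeating the proof of \Cref{prop:rankone-rigidity} with $\C^n$ replaced by $H$.
\begin{enumerate}
\item \Cref{lem:scalar} on each line gives the orientation $\sigma(e)$.
\item \Cref{lem:rankone-formula} gives preservation of $D(e,v)$ and of $|\Tr(e^*v)|$.
\item The connectivity argument makes $\sigma$ constant and yields $\Tr(f(e)^*f(v))\in\{\pm z,\pm\bar z\}$ according to $\sigma$.
\end{enumerate}
To remove the pair-dependent signs I would avoid topology in infinite dimension. Instead, for each finite-dimensional subspace pair $K\subseteq H$, I use that $\Umin(K)$ is simply connected ($\dim K\ge2$) and that $Q=\Umin(H)/\{\pm1\}$ carries the metric $d_Q$, so the lifting argument works on every finite-dimensional piece. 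One then fixes a base point $e_0$ and normalizes $g(e_0)=f(e_0)$. The lifts on an increasing net of finite-dimensional corners containing $e_0$ agree on overlaps, by uniqueness of lifts on a connected space with a fixed base point. They therefore glue to $g:\Umin(H)\to\Umin(H)$ with $g=\pm f$ and $\Tr(g(e)^*g(v))=\Tr(e^*v)$ or its conjugate. Surjectivity of $g$ up to sign follows from surjectivity of $f$ onto rank-one operators. \cite[Theorem 2]{Molnar2002}, which is valid for general Hilbert spaces (after composing with conjugation when $\sigma=-1$), then produces a standard surjective real-linear Schatten isometry $T$ with $f(ze)=\pm T(ze)$.

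Finally, for $A\in\Ffin(H)$ of rank $k$, put $B=T^{-1}f(A)$. Since $T$ and $f$ preserve rank, $B$ is finite rank. Exactly as in the proof of \Cref{thm:matrix-classification}, we have $\Mp(B,te)=\Mp(A,te)$ for all $t\in\R$ and $e\in\Umin(H)$. I then choose finite-dimensional subspaces $E_0,F_0$ containing the left and right supports of both $A$ and $B$, and pad them to equal dimension $n$. Restricting the profile identity to rank-one $e$ with supports in this corner, and using that the Schatten quasi-norm of an operator supported in the corner equals its matrix quasi-norm, \Cref{thm:max-profile} applies in $M_n(\C)$ and gives $B=\pm A$. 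This proves \eqref{eq:finite-rank-core}.

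The main obstacle is the gluing step for the sign-corrected map $g$ in infinite dimension. Specifically, one must check that the finite-dimensional lifts are compatible. I would try to sidestep this by defining $g(e)$ as the unique one of $\pm f(e)$ with $\Tr(g(e)^*g(e_0))=\Tr(e^*e_0)$ (or its conjugate) whenever $\Tr(e^*e_0)\ne0$, and extending through a two-step chain otherwise. Well-definedness would then be verified inside a single finite-dimensional corner containing the finitely many operators involved, where the finite-dimensional argument already applies.
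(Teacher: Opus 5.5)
Your proposal is correct and follows the same route as the paper. The ``moreover'' part comes from \Cref{prop:norm-orth} and \Cref{lem:finite-rank-recognition}, rank-one rigidity comes from rerunning \Cref{prop:rankone-rigidity} on $H$, and the finite-rank case uses $B=T^{-1}f(A)$ with max-profile uniqueness on a finite corner. The paper simply asserts that the lifting argument works on an arbitrary Hilbert space, whereas you exhaust $\Umin(H)$ by finite-dimensional corners containing $e_0$. The compatibility you flag as the main obstacle is automatic: for corners $K_1,K_2\subseteq K_3$, the restriction of the lift on $\Umin(K_3)$ is the unique lift on each $\Umin(K_i)$ with the prescribed value at $e_0$.
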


\begin{proof}
The last assertion about orthogonality follows from
\Cref{prop:norm-orth}; finite-rank preservation follows from
\Cref{lem:finite-rank-recognition}.  The proof of
\Cref{prop:rankone-rigidity} uses only rank-two spectral computations, the
projective Wigner theorem, and connectedness of the rank-one partial
isometries, all valid on an arbitrary Hilbert space.  It therefore supplies
a standard $T$ for which \eqref{eq:rankone-sign} holds.

Fix $A\in\Ffin(H)$ and set $B=T^{-1}f(A)$.  Rank preservation makes $B$
finite rank.  The same calculation as in the proof of
\Cref{thm:matrix-classification} gives
$\Mp(A,te)=\Mp(B,te)$ for every rank-one $e$ and real $t$.  Equality of
rank-one orthogonality loci identifies the finite-dimensional left and right
support spaces of $A$ and $B$.  Restricting to their common finite support
corner, the proof of \Cref{thm:max-profile} applies verbatim and gives
$B=\pm A$.  This proves \eqref{eq:finite-rank-core}.
\end{proof}

\section{Consequences, regularity, and the infinite-dimensional boundary}
\label{sec:consequences}

The concrete classification immediately returns operator-theoretic
information which is invisible in the definition of the max-kernel.

\begin{corollary}\label{cor:singular-values}
Let $0<p<1$, $n\ge2$, and let $f$ be as in
\Cref{thm:matrix-classification}.  Then
\[
 s_j(f(A))=s_j(A)\qquad(1\le j\le n),
\]
and hence $\rank f(A)=\rank A$ for every $A$.
\end{corollary}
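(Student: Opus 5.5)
The plan is to read the statement off the concrete form in \Cref{thm:concrete}, using only that singular values are invariant under the operations appearing there. By \Cref{thm:matrix-classification} and \Cref{thm:concrete}, we may write $f(A)=\eps(A)T(A)$, where $\eps(A)\in\{-1,1\}$ and $T$ is one of the four maps in \eqref{eq:fourforms}. Multiplying by a sign leaves $|X|=(X^*X)^{1/2}$ unchanged, since $(-X)^*(-X)=X^*X$. Hence $s_j(f(A))=s_j(T(A))$, and it suffices to show $s_j(T(A))=s_j(A)$ for each of the four forms.

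We then check each form.

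\begin{itemize}
\item For unitary $U,V$ we have $(UXV)^*(UXV)=V^*X^*XV$. This is unitarily similar to $X^*X$, so it has the same eigenvalues, and therefore $s_j(UXV)=s_j(X)$. This reduces the problem to showing that $A^{\mathsf t}$, $\overline A$ and $A^*$ have the same singular values as $A$.
\item For $A^*$: the matrices $AA^*$ and $A^*A$ have the same nonzero eigenvalues with multiplicities, and both are $n\times n$, so their zero multiplicities agree as well.
\item For $\overline A$: $\overline A^*\overline A=\overline{A^*A}$ is the entrywise conjugate of a Hermitian matrix. Its characteristic polynomial is the conjugate of that of $A^*A$, whose roots are real, so the eigenvalues coincide.
\item For $A^{\mathsf t}$: write $A^{\mathsf t}=\overline{A^*}$ and combine the two previous items.
\end{itemize}

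Finally, $\rank X$ equals the number of nonzero singular values of $X$, so rank preservation follows at once. It is also consistent with \Cref{lem:finite-rank-recognition}, which gives rank preservation directly.

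I do not expect any step to be a genuine obstacle. The only point needing care is making sure the sign $\eps(A)$ and each of the four forms are covered, including the conjugate-linear ones, where one argues through conjugation of the characteristic polynomial.
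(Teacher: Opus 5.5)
Your proposal is correct and follows the paper's proof: both read the conclusion off the concrete form $f(A)=\eps(A)T(A)$, where $T$ is one of the four maps in \eqref{eq:fourforms}, and then note that a real sign and each of these maps preserve singular values. You supply the routine verifications that the paper leaves implicit: the unitary similarity of $V^*X^*XV$ to $X^*X$, and the cases $A^*$, $\overline A$, $A^{\mathsf t}$.
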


\begin{proof}
Each of the four maps in \eqref{eq:fourforms} preserves the full singular
value sequence, and multiplication by a real sign has no effect on singular
values.
\end{proof}

As established in \Cref{prop:norm-orth},
every surjective max-phase isometry
preserves two-sided support orthogonality.
In particular, if $A\orth B$, then
\begin{equation}\label{eq:orth-additivity}
	\norm{f(A)\pm f(B)}_p^p
	=\norm A_p^p+\norm B_p^p,
\end{equation}
by \Cref{lem:p-additivity} and norm
preservation.

In the finite-dimensional classification, pointwise signs are the only
source of discontinuity.  In arbitrary dimension, continuity eliminates
even this freedom.

\begin{theorem}\label{thm:continuous}
Let $H$ be a complex Hilbert space, $\dim H\ge2$, and $0<p<1$.  Suppose
$f:\Sp(H)\to\Sp(H)$ is a continuous surjective max-phase isometry.  Then
there are a standard surjective real-linear Schatten isometry $T$ and a
constant $\delta\in\{-1,1\}$ such that
\begin{equation}\label{eq:continuous-form}
 f(A)=\delta T(A)\qquad(A\in\Sp(H)).
\end{equation}
\end{theorem}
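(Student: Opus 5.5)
We start from \Cref{thm:finite-rank-core}. It supplies a standard surjective real-linear Schatten isometry $T$ with $f(A)\in\{T(A),-T(A)\}$ for every $A\in\Ffin(H)$. Writing $f(A)=\eps(A)T(A)$ on $\Ffin(H)\setminus\{0\}$, the plan has three parts. First, show that continuity makes $\eps$ constant on $\Ffin(H)\setminus\{0\}$. Second, extend the identity $f=\delta T$ from finite-rank operators to all of $\Sp(H)$ by density. Third, check that nothing is lost when passing to the closure.

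For the first part, consider the map $g=T^{-1}\circ f$. It is continuous, since $T^{-1}$ is a surjective isometry for $d_p$. On $\Ffin(H)\setminus\{0\}$ it satisfies $g(A)=\eps(A)A$. Fix $A\ne0$ of finite rank. If $A_k\to A$ in $\Ffin(H)$, then $g(A_k)=\eps(A_k)A_k\to g(A)=\eps(A)A$. Since $\norm{A_k-(-A)}_p^p\to 2^p\norm A_p^p>0$, the sign $\eps(A_k)$ must equal $\eps(A)$ for large $k$. Thus $\eps$ is locally constant on $\Ffin(H)\setminus\{0\}$ with the relative $d_p$-topology. The key step is that this set is path connected when $\dim H\ge2$. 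Given nonzero $A,B$ of finite rank, the segment $tA+(1-t)B$ lies in $\Ffin(H)$ and is $d_p$-continuous, because $\norm{(t-s)(A-B)}_p^p=|t-s|^p\norm{A-B}_p^p$. If the segment passes through $0$, then $B$ is a negative multiple of $A$. In that case we detour: choose a rank-one $e$ with $e\ne cA$ for all real $c$ (possible since $\dim H\ge2$), and go $A\to e\to B$ along segments avoiding $0$. Hence $\eps\equiv\delta$ is constant and $f=\delta T$ on $\Ffin(H)$. The map $\eps$ is not even here, but that is consistent: the evenness in the finite-dimensional theorem was forced only by surjectivity, and a constant sign is even.

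For the second part, note that $\Ffin(H)$ is dense in $\Sp(H)$ for the metric $d_p$: truncate the singular value expansion. Both $f$ and $\delta T$ are continuous, the latter being a $d_p$-isometry. Two continuous maps into the Hausdorff space $\Sp(H)$ that agree on a dense set agree everywhere, so $f(A)=\delta T(A)$ for all $A\in\Sp(H)$. The standard form of $T$ is inherited from \Cref{thm:finite-rank-core}, and surjectivity is automatic.

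The main obstacle is the connectedness step. In particular, we must make sure that the locally constant argument uses the sign on a genuinely connected domain that excludes $0$, where the sign is undefined. Note also that continuity is only assumed for $f$ itself; no continuity of $\eps$ is needed a priori. This is why we pass through $g$ and use the positive gap $2^p\norm A_p^p$ between $A$ and $-A$, rather than attempting to differentiate anything.
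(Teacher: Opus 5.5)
Your proposal is correct and follows essentially the same route as the paper: the sign $\eps$ is locally constant on $\Ffin(H)\setminus\{0\}$ by continuity and the positive gap between $A$ and $-A$, the punctured finite-rank ideal is path connected via polygonal paths avoiding $0$, and the identity extends to all of $\Sp(H)$ by density of finite-rank operators. Passing through $g=T^{-1}\circ f$ and the explicit detour through a rank-one $e$ are only cosmetic refinements. Note that such an $e$ exists without using $\dim H\ge2$; for example, take $e=iA/\norm{A}_p$ when $\rank A=1$.
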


\begin{proof}
Let $T$ be supplied by \Cref{thm:finite-rank-core}.  On
$\Ffin(H)\setminus\{0\}$ write $f(A)=\eps(A)T(A)$.  Continuity implies that
$\eps$ is locally constant there: otherwise, a sequence $A_k\to A\ne0$
with $\eps(A_k)=-\eps(A)$ would make $f(A_k)$ converge simultaneously to
$T(A)$ and $-T(A)$ up to the fixed sign.  The punctured finite-rank ideal is
path connected.  Indeed, any two of its nonzero elements lie in a
finite-dimensional real subspace of dimension at least two, and a polygonal
path can be chosen to avoid the origin.  Hence $\eps$ is a single constant
$\delta$ on all nonzero finite-rank operators.

For arbitrary $A\in\Sp(H)$ choose finite-rank $A_k\to A$ in the Schatten
quasi-norm.  If $A\ne0$, continuity of $f$ and $T$ gives
\[
 f(A)=\lim_k f(A_k)=\delta\lim_k T(A_k)=\delta T(A).
\]
At zero the identity is automatic.
\end{proof}

\begin{remark}\label{rem:weaker-regularity}
The proof only uses continuity on the punctured class and sequential
continuity along finite-rank approximations.  Thus the conclusion remains
valid under that weaker pair of assumptions.  By contrast, measurability
alone cannot eliminate the phase: an even Borel sign function in
\eqref{eq:abstract-classification} still gives a Borel max-phase isometry.
\end{remark}

The next example makes clear why the smooth Banach-space argument cannot be
recycled below one.

\begin{example}\label{ex:line-test}
In $\ell_p^2$, put
\[
 x=2^{-1/p}(1,1),\qquad y=2^{-1/p}(1,-1).
\]
Then $\norm x_p=\norm y_p=1$, but
\[
 \max\{\norm{x+y}_p,\norm{x-y}_p\}=2^{1-1/p}<1.
\]
Thus the longer diagonal of two unit vectors may be shorter than either
vector.  In particular, equality in an ordinary triangle inequality cannot
be used to recover collinearity.  The proof of the main theorem succeeds
because it detects orthogonality through $p$-additivity and then uses the
entire rank-one perturbation profile.
\end{example}

We close by locating the exact remaining issue for arbitrary discontinuous
maps.  All constructions predicted by the finite-dimensional theorem remain
valid on $\Sp(H)$: if $T$ is a standard real-linear isometry and $\eps$ is
any even sign function, then $A\mapsto\eps(A)T(A)$ is a surjective max-phase
isometry.  \Cref{thm:finite-rank-core} proves that every surjective
max-phase isometry agrees with one of these constructions on a dense ideal.
For an infinite-rank compact operator, however, $|A|^{p-1}$ is unbounded;
the finite-corner differentiation in \Cref{thm:max-profile} therefore
requires a separate domain argument.  This is not a cosmetic point: small
singular values can turn first-order perturbations into $|t|^p$ terms.

The preceding results establish the classification
in finite dimensions and on the finite-rank ideal,
as well as on the full Schatten quasi-class under
the continuity assumption. The general
infinite-dimensional case without this assumption
remains open. A central difficulty is extending
the max-profile uniqueness principle in
\Cref{thm:max-profile} to arbitrary Schatten
operators. Finite-spectral approximation may
provide a useful approach to this problem.

\medskip
\noindent\textbf{Acknowledgments.}
The authors thank the anonymous referees for their valuable comments.
Thanks to all the members of the Functional Analysis Research team of the College of Mathematics and Statistics of Anqing Normal University for their discussion and correction of the difficulties and errors encountered in this paper.

\medskip
\noindent\textbf{Data availability.} No datasets were generated or analyzed in this work.\\
\textbf{Conflict of interest.} The author declares no conflict of interest.

\medskip
\noindent\textbf{Author contributions.}
All authors contributed equally to this work.

\end{document}